\newcolumntype{P}[1]{>{\centering\arraybackslash}p{#1}}
\newenvironment{roster}
 {\begin{enumerate}[font=\upshape,label=(\roman*)]}
 {\end{enumerate}}
 \newtheorem{thm}{Theorem}[section]
\newtheorem{c.intro}[thm]{Corollary}
\newtheorem{lemma}[thm]{Lemma}
\newtheorem{prop}[thm]{Proposition}
\newtheorem{ex}[thm]{Example}
\theoremstyle{definition}
\newtheorem{remark}[thm]{Remark}
\theoremstyle{definition}
\newtheorem{defn}[thm]{Definition}
\newcommand{\nc}{\newcommand}
\nc{\mc}{\mathcal}
\nc{\mb}{\mathbb}
\nc{\mf}{\mathfrak}
\nc{\ul}{\underline}
\nc{\ol}{\overline}
\nc{\N}{\mb N}
\nc{\R}{\mb R}
\nc{\Z}{\mb Z}
\nc{\Q}{\mb Q}
\nc{\C}{\mb C}
\nc{\F}{\mb F}
\nc{\dmo}{\DeclareMathOperator}
\nc{\mat}[4]{
    \begin{pmatrix}
      #1 & #2 \\
      #3 & #4
    \end{pmatrix}
}
\dmo{\Ker}{Ker} \dmo{\val}{val} \dmo{\ord}{ord}
\dmo{\I}{I}
\dmo{\II}{II}
\dmo{\odd}{odd}
\dmo{\sgn}{sgn}
\dmo{\dett}{det}
\dmo{\Span}{Span}
\dmo{\Syl}{Syl}
\dmo{\diag}{diag}
\dmo{\Sq}{Sq}
\dmo{\irr}{Irr}
\nc{\beq}{\begin{equation*}}
\nc{\eeq}{\end{equation*}}
\nc{\half}{\frac{1}{2}}
\dmo{\Ima}{Im}
\dmo{\Mod}{mod}
\dmo{\core}{core}
\dmo{\res}{res}
\dmo{\lin}{lin}
 \dmo{\St}{St}
 \dmo{\st}{st}
 \dmo{\Tr}{Tr}
 \dmo{\RO}{RO}
\dmo{\Sp}{Sp}
\dmo{\SO}{SO}
\dmo{\SL}{SL}
\dmo{\GL}{GL}
 \dmo{\Spin}{Spin}
\dmo{\GSp}{GSp}
\nc{\la}{\lambda}
  \nc{\eps}{\varepsilon}
 \nc{\lip}{\langle}
 \nc{\rip}{\rangle}
\nc{\gm}{\gamma}
\dmo{\Top}{top}
\dmo{\Perm}{Perm}
\dmo{\Res}{Res}
\dmo{\Ind}{Ind}
\dmo{\ind}{ind}
\dmo{\tr}{tr}
\dmo{\Sym}{Sym}
\dmo{\reg}{reg}
\dmo{\End}{End}
\dmo{\Hom}{Hom}
\dmo{\Pin}{Pin}
\dmo{\Or}{O}
\dmo{\SW}{SW}
\dmo{\orb}{orb}
\title[Stiefel-Whitney Classes]{Stiefel-Whitney Classes of representations of $\SL(2,q)$} 
\author{Neha Malik}
\author{Steven Spallone}
\DeclareMathOperator{\pr}{pr}
\dmo{\ORep}{ORep}
\dmo{\Rep}{Rep}
\newcommand{\as}{\alpha}
\newcommand{\tht}{\chi}
\newcommand{\f}{\mathbb{F}}
\newcommand{\z}{\mathbb{Z}}
\newcommand{\rr}{\mathbb{R}}
\newcommand{\cc}{\mathbb{C}}
\newcommand{\hh}{\mathbb{H}}
\newcommand{\fe}{\mathfrak{e}}
\address{Indian Institute of Science Education and Research, Pune-411008,Maharashtra,India}
\email{neha.malik@students.iiserpune.ac.in}
\address{Indian Institute of Science Education and Research, Pune-411008,Maharashtra,India}
\email{sspallone@gmail.com}
\date{\today}
\keywords{Stiefel-Whitney classes, special linear group, quaternions}
\subjclass{Primary 20G40, Secondary 55R40}
\begin{document}

\begin{abstract}
 We describe the Stiefel-Whitney classes (SWCs) of orthogonal representations $\pi$ of the finite special linear groups $G=\SL(2,\mb F_q)$, in terms of character values of $\pi$. 
From this calculation, we can answer interesting questions about SWCs of $\pi$. For instance, we determine the subalgebra of $H^*(G,\Z/2\Z)$  generated by the SWCs of orthogonal $\pi$, and we also determine which 
 $\pi$ have nontrivial mod $2$ Euler class.
 
 \end{abstract}
\maketitle
\tableofcontents
\section{Introduction}

The theory of Stiefel-Whitney classes (SWCs) of vector bundles is an old unifying concept in geometry.  Real representations, or equivalently, orthogonal complex representations of a group $G$ give rise to \emph{flat} vector bundles over the classifying space $BG$. Via this construction one associates SWCs $w_i(\pi)\in H^i(G,\z/2\z)$ to an orthogonal representation $\pi$ of $G$. Their sum $w(\pi)$, living in the group cohomology $H^*(G,\Z/2\Z)$, is known as the \textit{total Stiefel-Whitney class} of $\pi$.

 There do not seem to be many explicit calculations of such SWCs, notably when $G$ is nonabelian. (However see \cite{PG}.) This paper contains formulas for SWCs in terms of character values for $\SL(2,q)$, when $q$ is a prime power, achieved via \emph{detection results.} Formulas for SWCs of cyclic groups are well-known; we review this for the group of order $2$ in Section \ref{ea2}. The second SWCs of representations of $S_n$ and related groups were found in \cite{Ganguly}.

The case of $\GL(n,q)$ with $q$ odd was completed recently by Ganguly-Joshi in \cite{GJgl2} and \cite{GJgln}. To explain how their work relates to ours, we need to introduce ``detection''. 

Let $G$ be a finite group, and write $H^*(G,\z/2\z)$ for the mod $2$ group cohomology ring of $G$. Let $H^*_{\SW}(G,\z/2\z)$ be the subalgebra of $H^*(G,\z/2\z)$ generated by SWCs $w_i(\pi)$ of orthogonal representations $\pi$ of $G$. We say a subgroup $K$ \emph{detects} the mod $2$ cohomology of $G$ when the restriction map
$$i^*:H^*(G,\z/2\z)\to H^*(K,\z/2\z)$$
is an injection, and we say $K$ \textit{detects SWCs} of $G$, when this map is injective on $H^*_{\SW}(G,\z/2\z)$.
 Suppose one of these is true, and that $\pi$ is an orthogonal representation of $G$. Then $w(\pi)$ is specified by its image in $H^*(K,\z/2\z)$, which is actually the SWC of the restriction of $\pi$ to $K$. When there is a detecting subgroup, one simply writes $w(\pi)$ for this image. 
 
An essential tool in Ganguly-Joshi's papers is a theorem of Quillen  \cite{quillen}, asserting that the diagonal subgroup of $\GL(n,q)$ detects its mod $2$ cohomology.  However, such a result is not available to us. Instead, most of this paper is devoted to establishing the following:
 
\begin{thm}\label{prop} Let $G=\SL(2,q)$ with $q$ odd. Then the center $Z$ detects SWCs of $G$. 
\end{thm}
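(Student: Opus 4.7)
The plan is to prove Theorem~\ref{prop} by first reducing to the Sylow 2-subgroup $Q$ of $G$, and then identifying a subring of $H^*(Q,\Z/2\Z)$ that is both detected by $Z$ and contains the restriction of every Stiefel-Whitney class of an orthogonal representation of $G$. For $G = \SL(2,q)$ with $q$ odd, a Sylow 2-subgroup $Q$ is a generalized quaternion group $Q_{2^m}$ with $2^m$ the 2-part of $q^2-1$, and $Z(Q) = Z$. Since $H^*(G,\Z/2\Z) \hookrightarrow H^*(Q,\Z/2\Z)$ is injective by the standard transfer argument for a Sylow subgroup, and $\res^G_Z$ factors as $\res^Q_Z \circ \res^G_Q$, it suffices to show that $\res^Q_Z$ is injective on the image of $H^*_{\SW}(G)$ in $H^*(Q,\Z/2\Z)$.

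I would next pin down the kernel of $\res^Q_Z$. Because $Z = [Q,Q]$, every $\Z/2\Z$-character of $Q$ is trivial on $Z$, so the generators $u,v$ of $H^1(Q,\Z/2\Z) \cong \Hom(Q^{\mathrm{ab}},\Z/2\Z)$ both map to zero. On the other hand, the periodicity class $e \in H^4(Q,\Z/2\Z)$---the top Stiefel-Whitney class of the 4-dimensional real representation of $Q$ coming from $Q \hookrightarrow \Sp(1)$---restricts to $x^4 \in H^*(Z,\Z/2\Z) = \Z/2\Z[x]$. Using the standard presentation of $H^*(Q,\Z/2\Z)$, the kernel of $\res^Q_Z$ is the ideal $(u,v)$ and the subalgebra $\Z/2\Z[e]$ injects into $\Z/2\Z[x]$. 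The problem thus reduces to showing that $w(\pi)|_Q$ lies in $\Z/2\Z[e]$ for every orthogonal representation $\pi$ of $G$.

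Decomposing $\pi|_Q$ into $Q$-irreducibles---the trivial character, the three nontrivial sign characters $\chi_1,\chi_2,\chi_3$ with multiplicities $a_1,a_2,a_3$, and the 4-dimensional real irreducible $\rho$ from $Q \hookrightarrow \Sp(1)$ with multiplicity $b$---the Whitney product formula yields
\[
w(\pi|_Q) \;=\; \prod_{i=1}^{3}(1 + w_1(\chi_i))^{a_i}\cdot (1+e)^b.
\]
Two ingredients then drive the argument: (i) the identity $(1+u)(1+v)(1+u+v) = 1$ in $H^*(Q_8,\Z/2\Z)$, a consequence of the defining relations $u^2+uv+v^2 = 0$ and $u^2 v + uv^2 = 0$; and (ii) the equality $a_1 = a_2 = a_3$ forced by the transitive permutation action of $N_G(Q)/Q$ on the three nontrivial characters of $Q^{\mathrm{ab}}$. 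Together these collapse $w(\pi|_Q)$ to $(1+e)^b \in \Z/2\Z[e]$, giving the desired detection by $Z$.

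The main obstacle is obtaining both ingredients uniformly for all odd $q$. When $q \equiv \pm 3 \pmod{8}$ one has $Q = Q_8$ and $N_G(Q)/Q \cong \Z/3\Z$ acts transitively on $\{\chi_1,\chi_2,\chi_3\}$, so both ingredients hold directly; a check of the character theory of $\SL(2,q)$ against $Q_8$ confirms $a_1=a_2=a_3$ for every orthogonal $\pi$. When $q \equiv \pm 1 \pmod{8}$, however, $Q = Q_{2^m}$ with $m \geq 4$, the outer automorphism group need not permute all three degree-one classes transitively, and the analogous identity must be re-derived from the different presentation of $H^*(Q_{2^m},\Z/2\Z)$. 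In that regime I would attempt one of two routes: either reduce further to a distinguished $Q_8 \leq Q$ still containing $Z$ and apply the argument above to this subgroup, or compute directly in $H^*(Q_{2^m},\Z/2\Z)$ using additional constraints on the multiplicities $a_i$ imposed by the requirement that $\pi|_Q$ extends from $Q$ to $G$ (equivalently, stability under $G$-fusion of $Q$).
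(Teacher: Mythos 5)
There is a genuine gap here: your argument is complete only when the Sylow $2$-subgroup of $G$ is $Q_8$, i.e.\ for $q \equiv \pm 3 \pmod 8$ (in that case your two ingredients do hold: the identity $(1+x)(1+y)(1+x+y)=1$ follows from the relations in $H^*(Q_8)$, and $a_1=a_2=a_3$ follows from the order-$3$ fusion), whereas the case $q \equiv \pm 1 \pmod 8$ that you defer is where the real difficulty lives, and neither of your proposed routes closes it. Route one (descending to a copy of $Q_8$ inside $Q_{2^m}$) fails at the outset: restriction from $Q_{2^m}$ to $Q_8$ is \emph{not} injective on the image of $H^*(G)$. Indeed, as the paper itself remarks, any homomorphism $Q_8 \to Q_{2^m}$ induces the zero map $H^3(Q_{2^m}) \to H^3(Q_8)$, because $X^3$ must land in the cube of a degree-one class of $Q_8$ and all such cubes vanish. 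Since $H^*(G)$ has a second generator $\mathfrak b$ in degree $3$, the class $w(\pi)$ could a priori have components along $\mathfrak b\,\mathfrak e^i \in H^{4i+3}(G)$; these restrict to $0$ in $H^*(Z)$, so detection by $Z$ is \emph{precisely} the assertion that they vanish --- and after restricting to $Q_8$ you can no longer see them, so nothing is proved. Route two also needs more than you indicate: in $H^*(Q_{2^m}) \cong \z/2\z[X,Y,E]/(XY,\, X^3+Y^3)$ your key identity breaks, since $(1+X)(1+Y)(1+X+Y) = 1 + X^2 + Y^2 \neq 1$; moreover your decomposition of $\pi|_Q$ omits constituents: for $m \geq 4$ the group $Q_{2^m}$ has two-dimensional irreducibles $\varrho_k$ with $k$ even which are \emph{orthogonal} (they factor through dihedral quotients), with $w_1(\varrho_k) = X+Y \neq 0$ and nontrivial $w_2$. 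Collapsing $w(\pi|_Q)$ into $\z/2\z[E]$ would therefore require delicate fusion-stability constraints on all of these multiplicities, which you have not established.

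The paper sidesteps all fusion analysis by working upstairs in $H^*(G)$ rather than decomposing $\pi|_Q$. From the known ring structure, $H^n(G)$ is nonzero only for $n \equiv 0, 3 \pmod 4$; then the Wu formula with $(i,j)=(1,n-1)$ gives, for $n \equiv 3 \pmod 4$,
\begin{equation*}
\Sq^1(w_{n-1}(\pi)) = w_1(\pi)\cup w_{n-1}(\pi) + \binom{n-2}{1} w_n(\pi),
\end{equation*}
where the left side vanishes because $w_{n-1}(\pi) \in H^{n-1}(G)=0$, and $w_1(\pi)=0$ because $H^1(G)=0$; since $n-2$ is odd this forces $w_n(\pi)=0$. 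Hence $H^*_{\SW}(G) \subseteq \z/2\z[\mathfrak e]$ for \emph{all} odd $q$ at once, and the theorem follows from the chain of degree-$4i$ isomorphisms $H^{4i}(G) \hookrightarrow H^{4i}(Q_{2^n}) \to H^{4i}(Q^{(1)}) \to H^{4i}(Z)$, $\mathfrak e^i \mapsto E^i \mapsto e^i \mapsto v^{4i}$, where the middle maps are controlled by $\res^{Q_{2^n}}_{Q^{(1)}}\varrho = \rho$ and $w(S(\rho))=1+e$. This Wu-formula step, killing the possible $\mathfrak b$-components of SWCs before any restriction, is the missing idea in your proposal; without it, or without completing your fusion computation in $Q_{2^m}$, the case $q \equiv \pm 1 \pmod 8$ remains open.
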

 
From this we obtain:
\begin{thm}\label{thm2.section} 
Let $G=\SL(2,q)$ with $q$ odd. Let $\pi$ be an orthogonal representation of $G$. 
Then the total SWC of $\pi$ is
\begin{equation*}
w(\pi)=(1+\mathfrak{e})^{r_\pi},
\end{equation*}
where $\mathfrak{e}$ is the non-zero element in $H^4(\SL(2,q),\z/2\z)$ and $r_\pi=\frac{1}{8}(\chi_\pi(\mathbbm{1})-\chi_\pi(-\mathbbm{1})).$ $($Here $\mathbbm{1}$ is the identity matrix.$)$ If $\pi$ is also irreducible, then $w(\pi)=1$.
\end{thm}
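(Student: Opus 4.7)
The plan is to use Theorem~\ref{prop} to transfer the computation of $w(\pi)$ to the center $Z = \{\pm\mathbbm{1}\} \cong \Z/2\Z$, where SWCs of orthogonal representations are transparent. Write $H^*(Z,\Z/2\Z) = \F_2[t]$ with $\deg t = 1$, and let $i \colon Z \hookrightarrow G$ denote the inclusion. Decomposing $\pi|_Z = a\cdot \mathbf{1}_Z \oplus b\cdot \chi$, with $\chi$ the nontrivial character of $Z$, and comparing dimensions against $\pm\mathbbm{1}$, one obtains $b = \tfrac12(\chi_\pi(\mathbbm{1}) - \chi_\pi(-\mathbbm{1}))$. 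Since $w(\mathbf{1}_Z) = 1$ and $w(\chi) = 1+t$, naturality of the total SWC gives
\[
i^* w(\pi) = w(\pi|_Z) = (1+t)^b.
\]

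Two supporting facts are needed. First, I would show $i^*(\mathfrak{e}) = t^4$ in $H^4(Z,\Z/2\Z)$: the Sylow $2$-subgroup $P$ of $\SL(2,q)$ is generalized quaternion with center $Z$, so $H^*(G,\Z/2\Z)$ is $4$-periodic, and the periodicity class of $P$ is known to restrict to $t^4$ on its center. Second, I would show that $b$ is always a multiple of $4$, so that $r_\pi = b/4$ is a non-negative integer. To this end, consider the $(-1)$-eigenspace $V^- \subseteq \pi$ of $-\mathbbm{1}$; it is a $G$-invariant orthogonal subspace of dimension $b$, and each of its irreducible constituents $\rho$ satisfies $\rho(-\mathbbm{1}) = -\Id$ (i.e., is a \emph{faithful} irreducible). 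Inspection of the character tables of $\SL(2,q)$ shows that every faithful irreducible has even dimension and Frobenius--Schur indicator $0$ or $-1$. Pairing complex $\rho$ with $\bar\rho$ (forced equal multiplicities by reality of $V^-$) and noting that quaternionic $\rho$ must appear with even multiplicity in $V^-$, each $\rho$ contributes a multiple of $4$ to $b$.

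Using the mod $2$ identity $(1+t)^4 = 1+t^4$, one then computes
\[
i^*\bigl((1+\mathfrak{e})^{r_\pi}\bigr) = (1+t^4)^{r_\pi} = (1+t)^{4r_\pi} = (1+t)^b = i^* w(\pi),
\]
and the injectivity of $i^*$ on $H^*_{\SW}(G,\Z/2\Z)$ from Theorem~\ref{prop} yields $w(\pi) = (1+\mathfrak{e})^{r_\pi}$.

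For the final assertion, if $\pi$ is irreducible then Schur's lemma forces $\pi(-\mathbbm{1}) = \pm\Id$; since $\pi$ is orthogonal and no faithful irreducible of $\SL(2,q)$ is orthogonal, one must have $\pi(-\mathbbm{1}) = +\Id$, whence $b = r_\pi = 0$ and $w(\pi) = 1$. I expect the main obstacle to be the representation-theoretic input: ruling out Frobenius--Schur indicator $+1$ for faithful irreducibles of $\SL(2,q)$, a case analysis depending on $q \pmod 4$ using the explicit description of the faithful principal, discrete, and half-series representations.
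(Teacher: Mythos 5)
Your proposal is correct, and the detection skeleton (restrict to $Z$, compute $(1+t)^b$, pull back along $i^*(\mathfrak e)=t^4$, conclude by injectivity on $H^*_{\SW}$) is exactly the paper's; but you handle the two technical inputs by a genuinely different route. The paper restricts in two stages, $G\supset Q\supset Z$ with $Q$ the quaternion subgroup of order $8$: writing $\res^G_Q\pi\cong m_0 1\oplus m_1\chi_1\oplus m_2\chi_2\oplus m_3\chi_3\oplus m_4\,S(\rho)$, the divisibility $4\mid b$ is automatic because the linear characters restrict trivially to $Z$ while $S(\rho)|_Z=4\,\sgn$, so $r_\pi=m_4$ is an honest multiplicity and no representation theory of $\SL(2,q)$ itself is needed; likewise $i^*(\mathfrak e)=v^4$ is not cited as a known periodicity fact but derived from $w_4(S(\rho))=e$ (Lemma \ref{Srho}) together with the isomorphisms \eqref{qd} and \eqref{gqd}. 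Your direct-to-$Z$ route instead proves $4\mid b$ by decomposing the $(-1)$-eigenspace $V^-$, which is sound: $V^-$ is indeed a $G$-invariant orthogonal subspace, non-self-dual constituents occur in dual pairs, symplectic ones with even multiplicity, and every irreducible with $\omega(-\mathbbm1)=-1$ has even degree --- though that last fact is where your anticipated $q\bmod 4$ case analysis on the degree-$(q\pm1)/2$ half series actually lives, and your ``indicator $0$ or $-1$'' claim needs no table inspection at all: it is precisely Gow's formula (Theorem \ref{cen1}), which is also how the paper disposes of the irreducible case rather than via your (equivalent) Schur-lemma argument. Net effect: your version makes the meaning $r_\pi=\tfrac14\dim V^-$ transparent but pays for it with the explicit classification of central-character-$(-1)$ irreducibles of $\SL(2,q)$, whereas the paper's quaternion step is uniform in $q$ and self-contained; indeed you could eliminate your character-table input entirely by restricting $V^-$ to $Q$, where every constituent is a copy of the symplectic $\rho$ and orthogonality forces $S(\rho)$-blocks, giving $4\mid\dim V^-$ at once --- which is exactly the paper's mechanism.
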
  

In future work, these two theorems will be applied to determine SWCs of other finite groups of Lie type.

The case of $q$ even is quite different. Here the upper unitriangular subgroup $N \cong \mb F_q$ detects the mod $2$ cohomology of the special linear group, and we use this to prove:

\begin{thm} \label{prop.two} Let $(\pi,V)$ be an orthogonal representation of $G=\SL(2,q)$, with $q$ even. Then the total SWC of $\pi$ is
\beq
w(\pi)=(1+ \mc D  )^{m_\pi},
\eeq
where $m_\pi=\dim_{\C} \left( V/V^G \right)$, where $V^G$ is the $G$-fixed vectors in $V$, and $\mc D$ is the sum of the Dickson invariants of $\mb F_q$, viewed as a $\Z/2\Z$-vector space. 
\end{thm}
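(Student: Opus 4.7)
The overall strategy is to leverage the detection result preceding this theorem: the unipotent subgroup $N \cong \mathbb{F}_q$, viewed as an elementary abelian $2$-group of rank $n$ (where $q = 2^n$), satisfies $H^*(G,\mathbb{Z}/2\mathbb{Z}) \hookrightarrow H^*(N,\mathbb{Z}/2\mathbb{Z}) \cong \mathbb{F}_2[X_1,\ldots,X_n]$ under restriction. The total Stiefel--Whitney class $w(\pi)$ is therefore determined by its image $w(\pi|_N)$, reducing the problem to a computation on the elementary abelian group $N$.

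To compute $w(\pi|_N)$, decompose $\pi|_N = \bigoplus_{\chi \in \widehat{N}} m_\chi \, \chi$ into characters. The diagonal torus $T = \{\diag(t,t^{-1}) : t \in \mathbb{F}_q^{\times}\}$ normalizes $N$: one checks that $\diag(t,t^{-1})\, u_a\, \diag(t,t^{-1})^{-1} = u_{t^2 a}$, where $u_a = \bigl(\begin{smallmatrix} 1 & a \\ 0 & 1 \end{smallmatrix}\bigr)$. Since $q$ is even, squaring is a bijection on $\mathbb{F}_q^{\times}$, so this action is transitive on $N \setminus \{1\}$, and the dual action on $\widehat{N} \setminus \{\mathbbm{1}\}$ is also transitive. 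Because conjugation by $T \subset G$ preserves $\pi$ up to isomorphism, the multiplicities $m_\chi$ for non-trivial $\chi$ are all equal to a common value $m_\pi$.

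The Whitney product formula now gives $w(\pi|_N) = \prod_{\chi \neq \mathbbm{1}}(1 + \alpha_\chi)^{m_\pi}$, where $\alpha_\chi \in H^1(N,\mathbb{Z}/2\mathbb{Z})$ is the class corresponding to $\chi$. The classical identity $\prod_{0 \neq v \in V^*}(1 + v) = 1 + \mc D$ in $H^*(N,\mathbb{Z}/2\mathbb{Z})$ follows from the Dickson resolvent $\prod_{v \in V^*}(T + v) = \sum_{i=0}^{n} c_i\, T^{2^{n-i}}$ (with $c_0 = 1$ and the $c_i$ for $i \geq 1$ the Dickson invariants of $V = \mathbb{F}_q$, so $\mc D = \sum_{i \geq 1} c_i$) upon specialization at $T = 1$. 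Combining, $w(\pi|_N) = (1 + \mc D)^{m_\pi}$, and by the detection injection this lifts to $w(\pi) = (1 + \mc D)^{m_\pi}$ in $H^*(G,\mathbb{Z}/2\mathbb{Z})$.

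The remaining task, which I expect to be the main obstacle, is to identify the $N$-isotypic multiplicity $m_\pi$ with the intrinsic invariant $\dim_{\mathbb{C}}(V/V^G)$. The difficulty is that $m_\pi$ is read off the $N$-restriction whereas $V^G$ encodes the full $G$-action, and these are not a priori directly comparable. I would reduce to irreducible constituents of $\pi$, compute $m_\pi$ explicitly for each irreducible using the character table of $\mathrm{SL}(2,q)$ together with Frobenius reciprocity for parabolic induction, and verify the identity $m_\pi = \dim V - \dim V^G$ in each case; the general formula then follows by additivity under direct sums.
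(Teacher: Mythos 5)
Your cohomological computation is correct and is essentially identical to the paper's: detection by the Sylow $2$-subgroup $N$ (the paper's Lemma~\ref{syl}), transitivity of the $T$-conjugation action on $\widehat{N}\setminus\{1\}$ forcing $\res^G_N\pi\cong \ell_\pi 1\oplus m_\pi\left(\reg(N)\ominus 1\right)$, the Whitney product formula, and the identity $\prod_{v\in H^1(N)}(1+v)=1+\mc D$ (the paper quotes Wilkerson's Theorem~\ref{wilk}, where you specialize the Dickson resolvent at $T=1$; same content). Up to this point you have correctly proved $w(\pi)=(1+\mc D)^{m_\pi}$ with $m_\pi$ the common multiplicity of the nontrivial characters of $N$ in $\res^G_N\pi$.

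The step you defer to the end --- verifying $m_\pi=\dim_\C(V/V^G)$ irreducible-by-irreducible from the character table --- is where the plan breaks, because that identity is \emph{false}, and your proposed verification would in fact refute it. The paper's Lemma~\ref{mpi} shows $m_\sigma=1$ for \emph{every} nontrivial irreducible $\sigma$: simplicity of $G$ (for $q\geq 4$) gives $N\not\leqslant\ker\sigma$, so $m_\sigma\geq 1$, and the degree count $\deg\sigma=\ell_\sigma+m_\sigma(q-1)\leq q+1<2(q-1)$ forces $m_\sigma=1$, with $q=2$ checked by hand. Hence $m_\pi$ equals the \emph{number of nontrivial irreducible constituents} of $\pi$, with character formula $m_\pi=\frac1q\left(\chi_\pi(\mathbbm{1})-\chi_\pi(n_0)\right)$ for $n_0$ a nontrivial unipotent --- not $\dim_\C(V/V^G)=\deg\pi-\dim V^G$. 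Concretely, for the Steinberg representation one has $\res^G_N\St\cong\reg(N)$, so $m_{\St}=1$ while $\dim_\C(V/V^G)=q$, and $(1+\mc D)^1\neq(1+\mc D)^q$. The clause ``$m_\pi=\dim_\C(V/V^G)$'' in the statement is a misidentification in the paper itself: it contradicts Lemma~\ref{mpi}, the remark immediately following it, and the example $w(\reg(G))=(1+\mc D)^{q^2-1}$, since $\dim_\C\left(\reg(G)/\reg(G)^G\right)=|G|-1\neq q^2-1$ whereas the number of nontrivial constituents of $\reg(G)$ counted with multiplicity is exactly $q^2-1$. So the correct completion of your argument is not a case check of that identity but the multiplicity-one lemma above, after which the exponent is the number of nontrivial irreducible constituents (your additivity remark then handles general $\pi$); your instinct that this identification was the main obstacle was right, but the resolution is to correct the exponent's description, not to prove it as stated.
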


(Dickson invariants are reviewed in Section  \ref{even}.)

\bigskip

There are several corollaries of Theorems \ref{thm2.section}  and \ref{prop.two}. Let $G=\SL(2,q)$.  Firstly,  we describe the subgroup  of the complete cohomology ring $H^\bullet(G,\Z/2\Z)$ (see Section \ref{vr}) generated by $w(\pi)$, as $\pi$ varies over orthogonal representations. 

\begin{c.intro} 
This subgroup equals: 
\begin{roster}
\item $\{(1+\fe)^n: n\in\z\}$, for $q$ odd,
\item $\{(1+ \mc D  )^n : n \in \z\}$, for $q$ even.
\end{roster}
\end{c.intro}

Secondly, we compute $H^*_{\SW}(G,\z/2\z)$.
 
\begin{c.intro}
The subalgebra  $H^*_{\SW}(G,\z/2\z)$ is:
\begin{roster}
\item $\z/2\z[\fe]$, for $q$ odd,
\item generated by the Dickson invariants of $\f_q$, for $q$ even.
\end{roster}
\end{c.intro}

Another characteristic class of importance is the Euler class $e(\pi) \in H^d(G,\Z)$, where $d=\deg \pi$. We define $w_{\Top}(\pi)=w_{\deg \pi}(\pi)$; it is the ``top'' Stiefel-Whitney class of $\pi$. By Property 9.5 of \cite{milnor}, we know that $w_{\Top}(\pi)$ is the reduction of $e(\pi)$ mod $ 2$.

\begin{c.intro}\label{topec}
 Let $(\pi,V)$ be an orthogonal representation of $G$. Then $w_{\Top}(\pi) \neq 0$ precisely when:
\begin{roster}
\item\label{topodd} $\pi(-\mathbbm1)=-1$, meaning $\pi(-\mathbbm 1)$ acts on $V$ by the scalar $-1$, for $q$ odd,
\item $V^N=\{0\}$, for $q$ even. $($Equivalently, $\pi$ is cuspidal.$)$
\end{roster}
\end{c.intro}

Finally, we determine the \emph{obstruction degree} of an orthogonal $\pi$, meaning the least $k>0$ with $w_k(\pi)\neq 0$. (If $w(\pi)$ is trivial, then this degree is infinite.) Write $\ord_2(n)$ for the highest power of $2$ which divides an integer $n$.
\begin{c.intro} The obstruction degree of $\pi$ is:
\begin{roster}
\item $2^{t+2}$, where $t=\ord_2(r_\pi)$, for $q$ odd,
\item $2^{r+s-1}$, where $s=\ord_2(m_\pi)$ and $q=2^r$.
\end{roster}
\end{c.intro}

The layout of the paper is as follows: Section \ref{not} recalls some fundamental notions used throughout the paper.  Section  \ref{quatsec} is dedicated to the generalized quaternions, particularly the Quaternion group $Q$ of order 8. These groups are featured in the detection of $\SL(2,q)$ when $q$ is odd. Here we also determine $w_{4i}(\pi)$ for orthogonal representations $\pi$ of $Q$.  In Section \ref{odd}, we prove Theorems \ref{prop}  and \ref{thm2.section},  the corollaries for $q$ odd, and finally compare our result to the $\GL(2,q)$ case. In Section \ref{even}, we prove Theorem \ref{prop.two}  and the corollaries for $q$ even.

\bigskip

\textbf{Acknowledgements.} 
The authors would like to thank Ryan Vinroot, Dipendra Prasad, and Rohit Joshi for helpful discussions.  This paper comes out of the first author’s Ph.D. thesis \cite{Malik.thesis} at IISER Pune, during which she 
was supported by a Ph.D. fellowship from the Council of Scientific and Industrial Research, India.
 
\section{Notations \& Preliminaries}\label{not}

\subsection{Representations}
Let $G$ be a finite group. All our representations will be finite-dimensional. Let $\pi$ be a representation of $G$ on a complex vector space $V$. 
Write $\chi_\pi(g)$ for the character of $\pi$ at an element $g \in G$.
If $H$ is a subgroup of $G$, write $\res^G_H \pi$ for the restriction of $\pi$ to $H$.
Write $\reg(G)$ for the regular representation of $G$.

We say $\pi$  is \emph{orthogonal}, provided there exists a non-degenerate $G$-invariant symmetric bilinear form $B:V\times V\to \C$.
 We say it is \emph{symplectic}, provided there exists a non-degenerate $G$-invariant antisymmetric bilinear form $B:V\times V\to \C$.
 
When $\pi$ is self-dual and irreducible, it is either orthogonal or symplectic. In this case, the Frobenius-Schur Indicator $\eps(\pi)$ is a sign defined as $1$ when $\pi$ is orthogonal, and $-1$ when $\pi$ is symplectic.

\subsection{Orthogonal Representations}
We are especially interested in orthogonal representations.
 
 From \cite[Chapter II, Section 6]{BrokerDieck}, we note:

\begin{prop} \label{prop.one}
If $(\pi,V)$ is orthogonal, then there is a $(\pi_0,V_0)$, with $V_0$ a real vector space, so that $\pi_0 \otimes_{\R} \C \cong \pi$. Moreover, $\pi_0$ is unique up to isomorphism.
\end{prop}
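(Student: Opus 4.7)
The plan is to prove existence by constructing a $G$-invariant real form of $V$ directly, and to prove uniqueness via a character argument. For existence, I will build a $G$-equivariant conjugate-linear involution $J\colon V\to V$ and take $V_0:=V^J$; the natural map $V_0\otimes_\R \C \to V$ sending $v\otimes 1+w\otimes i$ to $v+iw$ is then a $G$-equivariant $\C$-linear isomorphism, exhibiting $V$ as the complexification of the real representation on $V_0$.

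To construct $J$, I first average any Hermitian inner product over $G$ to obtain a $G$-invariant positive-definite Hermitian form $H$ on $V$. Since $B$ and $H$ are both nondegenerate, the relation $H(\sigma v, w) = B(v,w)$ defines a conjugate-linear $G$-equivariant automorphism $\sigma$ of $V$. Using the symmetry of $B$ and the conjugate-symmetry of $H$, one checks that $A := \sigma^2$ is $\C$-linear, $G$-equivariant, self-adjoint with respect to $H$, and positive definite. The spectral theorem then produces a positive-definite self-adjoint square root $A^{1/2}$, which lies in the polynomial algebra generated by $A$ and is therefore $G$-equivariant. Setting $J := A^{-1/2}\sigma$, a routine verification shows that $J$ is conjugate-linear, $G$-equivariant, and satisfies $J^2 = \Id$, so its $+1$-eigenspace (taken over $\R$) is the desired real form.

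For uniqueness, suppose $(\pi_0,V_0)$ and $(\pi_0',V_0')$ are real representations of $G$ whose complexifications are both $G$-isomorphic to $\pi$. Since the trace of a real matrix equals the trace of its complex extension, the character of $\pi_0$ as a real representation coincides with the character of $\pi_0\otimes_\R \C \cong \pi$, and similarly for $\pi_0'$. Hence $\pi_0$ and $\pi_0'$ share a common character, and since real representations of the finite group $G$ are semisimple and determined up to isomorphism by their characters, we conclude $\pi_0\cong \pi_0'$.

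The principal obstacle is verifying that $\sigma^2$ is positive-definite and self-adjoint with respect to $H$; this is the step where the symmetry (as opposed to antisymmetry) of $B$ enters essentially, distinguishing the orthogonal case from the quaternionic one. Once this is in hand, extracting the square root, assembling $J$, and completing the character argument for uniqueness are all formal.
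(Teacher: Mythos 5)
The paper does not actually prove this proposition; it cites \cite[Chapter II, Section 6]{BrokerDieck}, and the argument there is essentially the one you give: an averaged $G$-invariant positive-definite Hermitian form $H$, the conjugate-linear $G$-map $\sigma$ defined by $H(\sigma v,w)=B(v,w)$, the positive self-adjoint operator $A=\sigma^2$ (this is where symmetry of $B$ enters, exactly as you say), and the real structure $J=A^{-1/2}\sigma$ with $V_0=V^J$. Your proof is correct, including the two delicate points: $A^{1/2}$ can be taken as a \emph{real}-coefficient polynomial in $A$ (the spectrum is positive real), so it commutes with the conjugate-linear $\sigma$, which is what makes $J^2=\Id$ go through; and the uniqueness argument is valid since real representations of a finite group are semisimple and determined by their (real-valued) characters.
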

 
Given a representation $(\pi,V)$ with dual $(\pi^\vee,V^\vee)$, we can symmetrize it to get an orthogonal representation of $G$ by defining $$S(\pi):=\pi\oplus\pi^\vee$$ on the vector space $V\oplus V^\vee$. We give a symmetric  $G$-invariant bilinear map $B$ on $V\oplus V^\vee$ as, $$B((v,\as),(w,\beta))= \langle \alpha,w \rangle+ \langle \beta,v \rangle.$$ 
We call $S(\pi)$ the \textit{symmetrization} of $\pi.$\\
\vspace{-3pt}

Every orthogonal complex representation  $\Pi$ of $G$ can be decomposed as 
\begin{equation}\label{decorth}\Pi\cong  \bigoplus_i \pi_i \oplus\bigoplus_j S(\varphi_j), \end{equation}
such that each $\pi_i$ is irreducible orthogonal and $\varphi_j$ are irreducible non-orthogonal representations of $G$. 

\begin{defn}
A complex representation $\pi$ of $G$ is said to be an \textit{orthogonally irreducible representation} (OIR), provided $\pi$ is orthogonal, and can not be decomposed into a direct sum of orthogonal representations.
\end{defn}

 An irreducible representation $\pi$ is \textit{orthogonally irreducible} if and only if $\pi$ is orthogonal. Moreover, for $\varphi$ irreducible and non-orthogonal, its symmetrization $S(\varphi)$ is an OIR.  
In fact, the decomposition in \eqref{decorth} establishes that all the OIRs of $G$ are either irreducible orthogonal $\pi$, or of the form $S(\varphi)$ with $\varphi$ irreducible and non-orthogonal. 

\subsection{Stiefel-Whitney Classes}

In this section we define Stiefel-Whitney classes for orthogonal complex representations of finite groups. This is equivalent to the definition in \cite{Benson} or \cite{GKT}, given for real representations.

Recall from  \cite{milnor} that a ($d$-dimensional real) vector bundle $\mc V$ over a paracompact space $B$ is assigned a sequence 
\beq
w_1(\mc V), \ldots, w_d(\mc V)
\eeq
of cohomology classes, with each $w_i(\mc V) \in H^*(B,\Z/2\Z)$.

For every finite group $G$ there is a classifying space $BG$ with a principal $G$-bundle $EG$, unique up to homotopy. From a real representation $(\rho,V)$ one can form the associated vector bundle $EG[V]$ over $BG$. Then one puts
\beq
w_i^\R(\rho)=w_i(EG[V]);
\eeq
see for instance \cite{Benson} or  \cite{GKT}. 
Moreover the singular cohomology $H^*(BG,\Z/2\Z)$ is isomorphic to the group cohomology $H^*(G,\Z/2\Z)$.  At this point we simply write $H^*(G)=H^*(G,\Z/2\Z)$.

Hence, to a real representation $\rho$, we can associate cohomology classes $w_i^\R(\rho) \in H^i(G)$, called Stiefel-Whitney classes (SWCs).
We prefer to work with an orthogonal complex representation $\pi$, which by Proposition  \ref{prop.one} comes from a unique real representation $\pi_0$. 
We thus define
$$w_i(\pi):=w_i^\rr(\pi_0),$$
for $0\leq i\leq \deg\pi$.

  A group homomorphism $\varphi: G_1 \to G_2$ induces a map 
  $
  \varphi^*: H^*(G_2) \to H^*(G_1).
  $
  The SWCs are \textit{functorial} in the following sense: If  $\pi$ is an orthogonal representation of $G_2$, then  
  \begin{equation} \label{funct.w}
  \varphi^*(w(\pi))=w(\pi \circ \varphi).
  \end{equation}
  
The SWCs are also \textit{multiplicative.} This means if $\pi_1$ and $\pi_2$ are both orthogonal, then \begin{equation}\label{p4}w(\pi_1\oplus \pi_2)=w(\pi_1)\cup w(\pi_2).\end{equation}
 The first few $w_i(\pi)$ have nice interpretations. Firstly, $w_0(\pi)=1 \in H^0(G)$ and the first SWC, applied to linear characters $G \to \{\pm 1\} \cong \Z/2\Z$, is the well-known isomorphism
  \beq
  w_1: \Hom(G, \pm1)  \overset{\cong}{\to} H^1(G).
  \eeq

More generally, if $\pi$ is an orthogonal representation, then $w_1(\pi)=w_1(\det \pi)$, where $\det \pi$ is simply the composition of $\pi$ with the determinant map.
When $\det \pi=1$, then $w_2(\pi)$ vanishes iff $\pi$ lifts to the corresponding spin group. (See \cite{Ganguly} for details.)
\bigskip

To complex representations $\pi$ of $G$, are associated cohomology classes $c_i(\pi)\in H^{2i}(G,\z)$, called \textit{Chern classes}. As with SWCs, $c_0(\pi)=1$ and the first Chern class, applied to linear characters, gives the well-known isomorphism
$$c_1:\Hom(G,S^1)\xrightarrow{\cong} H^2(G,\z).$$
More generally, $c_1(\pi)=c_1(\det\pi)$ for a complex representation $\pi$.

The SWC of a symmetrization $S(\pi)$ can be obtained from the Chern class of $\pi$ via the formula
\begin{equation}\label{ksc}w(S(\pi))=\kappa(c(\pi)).\end{equation}
Here $\kappa: H^*(G,\z)\to H^*(G,\z/2\z)$ is the \emph{coefficient homomorphism} of cohomology. 
(See \cite[Lemma 1]{GJgl2}, based on \cite[Problem 14-B]{milnor}.)
 
From this we make an observation for later use: 
\begin{lemma}\label{spi12}
If $\det \pi=1$, then $w_i(S(\pi))=0 \text{ for }i=1,2,3$.
\end{lemma}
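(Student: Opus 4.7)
The plan is to use the formula \eqref{ksc}, namely $w(S(\pi))=\kappa(c(\pi))$, and read it off degree by degree. Since Chern classes live in even degrees, i.e.\ $c_i(\pi)\in H^{2i}(G,\z)$, the element $\kappa(c(\pi))=1+\kappa(c_1(\pi))+\kappa(c_2(\pi))+\cdots$ has no components in odd degree. This immediately yields $w_1(S(\pi))=0$ and $w_3(S(\pi))=0$, regardless of any hypothesis on $\det \pi$.

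For the remaining class, comparing degrees in \eqref{ksc} gives
\[
w_2(S(\pi))=\kappa(c_1(\pi)).
\]
Now I would invoke the fact recalled just before the lemma that $c_1(\pi)=c_1(\det\pi)$, together with the isomorphism $c_1:\Hom(G,S^1)\xrightarrow{\cong}H^2(G,\z)$. The hypothesis $\det \pi = 1$ means that $\det \pi$ is the trivial character, which is the identity element of $\Hom(G,S^1)$; hence $c_1(\det\pi)=0$ in $H^2(G,\z)$, and so $w_2(S(\pi))=\kappa(0)=0$.

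There is no real obstacle here: the statement is a direct bookkeeping consequence of \eqref{ksc} combined with the degree-parity of Chern classes and the functoriality of $c_1$. The only point to be careful about is to match the degrees correctly on both sides of \eqref{ksc}, and to note that among $i=1,2,3$ only $i=2$ requires the hypothesis on $\det\pi$; the vanishing in odd degrees is automatic.
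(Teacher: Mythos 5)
Your proof is correct and follows exactly the paper's argument: the odd-degree classes vanish because $\kappa(c(\pi))$ is concentrated in even degrees, and $w_2(S(\pi))=\kappa(c_1(\pi))=\kappa(c_1(\det\pi))=0$ by the hypothesis $\det\pi=1$. The paper states this in two sentences; you have merely spelled out the same bookkeeping in more detail.
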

\begin{proof}
Since Chern classes live in even degrees, all the odd SWCs of $S(\pi)$ vanish. Moreover $c_1(\pi)=0$ by the determinant hypothesis.
\end{proof}
 \subsection{Steenrod Squares}
For $n,i\geq 0$, there are operations on cohomology, called \textit{Steenrod Squares},
 $$\Sq^i:H^n(G)\rightarrow H^{n+i}(G).$$

These are additive homomorphisms and $\Sq^0$ is the identity. Steenrod operations are \textit{functorial}, meaning for a group homomorphism  $\varphi: G\to G'$, we have $$\varphi^*(\Sq^iy)=\Sq^i(\varphi^*(y)) \text{ for all }y\in H^n(G').$$
 
We now state the well-known \textit{Wu formula}:
\begin{prop}[\cite{may1999concise}, Chapter 23, Section 6]\label{Wu}
Let $\pi$ be an orthogonal representation of $G$. Then, the cohomology class $\Sq^i(w_j(\pi))$ can be expressed as a polynomial in $w_1(\pi),\hdots,w_{i+j}(\pi)$:
$$\Sq^i(w_j(\pi))=\sum_{t=0}^i{j+t-i-1\choose t}w_{i-t}(\pi)w_{j+t}(\pi).$$
\end{prop}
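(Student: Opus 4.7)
The plan is to use the \emph{splitting principle}. By naturality of $w_i$ and $\Sq^i$, I would pull $\pi$ back to a space---such as the total space of an iterated real projective bundle over $BG$---on which the associated real bundle splits as a direct sum of line bundles $L_1, \ldots, L_n$, with the pullback on mod-$2$ cohomology being injective. Setting $x_k = w_1(L_k) \in H^1$, I then have $w(\pi) = \prod_k (1 + x_k)$, so $w_j(\pi) = e_j(x_1, \ldots, x_n)$, the $j$-th elementary symmetric polynomial.

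For any degree-$1$ class $x$ one has $\Sq^1(x) = x^2$ and $\Sq^i(x) = 0$ for $i \geq 2$, so the total Steenrod square $\Sq = \sum_i \Sq^i$ sends $x_k$ to $x_k + x_k^2$. Since $\Sq$ is a ring homomorphism by the Cartan formula, $\Sq(w_j(\pi)) = e_j(x_1 + x_1^2, \ldots, x_n + x_n^2)$, and $\Sq^i(w_j(\pi))$ is simply the degree-$(i+j)$ component of this expression.

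What remains is a formal identity: that the degree-$(i+j)$ piece of $e_j(x_1 + x_1^2, \ldots, x_n + x_n^2)$ equals $\sum_{t=0}^{i} \binom{j+t-i-1}{t}\, e_{i-t}\, e_{j+t}$ modulo $2$. I would approach this via generating functions. Comparing
$$\sum_j \Sq(w_j)\, u^j = \prod_k (1 + u x_k + u x_k^2) \quad \text{and} \quad \sum_j w_j\, u^j = \prod_k (1 + u x_k),$$
and factoring $1 + u x + u x^2 = (1 + u x)\bigl(1 + u x^2 (1 + u x)^{-1}\bigr)$ inside a completion of the polynomial ring, one can expand using the geometric series $(1 + u x)^{-1} = \sum_m u^m x^m$ (valid mod $2$). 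The nonstandard binomial coefficient $\binom{j + t - i - 1}{t}$ then arises when collecting like powers of $u$ and invoking Vandermonde-style identities on the resulting sums. The main obstacle is this final combinatorial bookkeeping: ensuring that the formal manipulation inside the completed ring reduces cleanly to a polynomial identity in the $w_j$'s with precisely the stated mod-$2$ coefficients.
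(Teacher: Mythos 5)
The paper does not actually prove this proposition: it is quoted verbatim from May's book (Chapter 23, Section 6), so you are being asked to supply the standard proof. Your skeleton is exactly that standard argument, and its first two stages are sound: by naturality of both $w_i$ and $\Sq^i$ one may work universally (either via an iterated projective bundle as you say, or directly in $H^*(BO(n))$, which injects into $H^*(BO(1)^n)$), so that $w_j(\pi)$ becomes the elementary symmetric polynomial $e_j$ in degree-one classes $x_1,\dots,x_n$; then the Cartan formula and $\Sq(x_k)=x_k+x_k^2$ reduce the proposition to a symmetric-function identity over $\F_2$. No objection up to there.

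The gap is the step you defer as ``final combinatorial bookkeeping'': at this point that identity \emph{is} the entire content of the proposition, and the route you sketch does not visibly close. After your (correct) factorization $1+ux+ux^2=(1+ux)\bigl(1+ux^2(1+ux)^{-1}\bigr)$, the remaining product $\prod_k\bigl(1+\sum_{m\geq 0}u^{m+1}x_k^{m+2}\bigr)$ expands over subsets of the variables into products of arbitrarily many factors $f(x_{k_1})\cdots f(x_{k_r})$, i.e.\ monomial-type symmetric functions of unbounded order, whereas the target $\sum_{t=0}^{i}\binom{j+t-i-1}{t}e_{i-t}e_{j+t}$ is bilinear in the $e$'s. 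Showing that all higher-order cross terms collapse mod $2$ into exactly this quadratic expression is not a Vandermonde-style collection of coefficients; it is the hard part, and as described it has not been carried out (nor is it obvious that it can be, in the form stated). A finish that does close cleanly is induction on the number of variables: writing $e_j(x_1,\dots,x_n)=e'_j+x_ne'_{j-1}$ with primes denoting elementary symmetric polynomials in $x_1,\dots,x_{n-1}$, the Cartan formula gives $\Sq^i(x_ne'_{j-1})=x_n\Sq^i(e'_{j-1})+x_n^2\Sq^{i-1}(e'_{j-1})$, and the inductive hypothesis together with Pascal's rule $\binom{a}{t}\equiv\binom{a-1}{t}+\binom{a-1}{t-1} \pmod 2$ and the instability relation $\Sq^i y=0$ for $i>\deg y$ yields the stated coefficients. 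Either supply such an induction or cite the identity; as it stands the proposal proves the easy reduction but not the formula.
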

With $i=1$, $j=2$ for instance, we can express $w_3(\pi)$ in terms of $w_1,w_2$ as follows:
\begin{equation*}\label{w3}w_3(\pi)=w_1(\pi)\cup w_2(\pi)+\Sq^1(w_2(\pi)).\end{equation*}

\subsection{Elementary Abelian $2$-groups}\label{ea2}  
 Let $C_2=\{ \pm 1\},$ the cyclic group of order $2$. Here we review explicit descriptions of group cohomology for elementary abelian 2-groups $C_2^n$ and describe SWCs in terms of these descriptions.

  We write `sgn' for the order $2$ linear character of $C_2$. It is known \cite{KT} that 
\begin{equation}\label{c2}H^*(C_2)\cong\z/2\z[v],\end{equation}
where  $v=w_1(\sgn)$. Since this group is so simple, we can give our first example of expressing SWCs in terms of character values.  (Note that every representation of $C_2$ is orthogonal.) 

\begin{lemma}\label{c2swc} Let $\pi$ be a representation of $C_2$, and $b_\pi=\frac{1}{2} \left( \deg \pi-\chi_\pi(-1) \right)$. Then, the total SWC of $\pi$ is
\beq
w(\pi)=(1+v)^{b_\pi}.
\eeq
\end{lemma}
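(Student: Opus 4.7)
The plan is straightforward: decompose $\pi$ into its isotypic components, then apply multiplicativity (equation \eqref{p4}) together with the known SWC of the two irreducible characters of $C_2$.

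First I would observe that the irreducible representations of $C_2$ are precisely the trivial character $\mathbbm{1}$ and the sign character $\sgn$, so there exist unique nonnegative integers $a,b$ with
\[
\pi \cong a \cdot \mathbbm{1} \oplus b \cdot \sgn.
\]
Evaluating the character at $1$ and $-1$ gives $a+b = \deg \pi$ and $a-b = \chi_\pi(-1)$, whence $b = \tfrac{1}{2}(\deg\pi - \chi_\pi(-1)) = b_\pi$.

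Next I would compute the SWCs of the two summand types. The trivial representation corresponds to a trivial real bundle, so $w(\mathbbm{1}) = 1$. The sign character has $w_1(\sgn) = v$ by the defining isomorphism $w_1 \colon \Hom(C_2,\pm 1) \xrightarrow{\cong} H^1(C_2)$, and has no higher SWCs because it is one-dimensional; thus $w(\sgn) = 1 + v$. Combining these via multiplicativity,
\[
w(\pi) = w(\mathbbm{1})^a \cup w(\sgn)^b = 1 \cdot (1+v)^{b_\pi} = (1+v)^{b_\pi},
\]
which is exactly the claimed formula.

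There is no real obstacle here: the only inputs are the complete classification of irreducible representations of $C_2$, the identification $w_1(\sgn)=v$ built into the description \eqref{c2} of $H^*(C_2)$, and multiplicativity. The lemma is really a sanity check illustrating the character-value style of formula that the paper will then establish in the much harder cases $G=\SL(2,q)$.
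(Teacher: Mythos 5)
Your proof is correct and follows essentially the same route as the paper: decompose $\pi$ as $a\cdot 1 \oplus b\cdot\sgn$, solve the character equations at $\pm 1$ to identify $b = b_\pi$, and conclude by multiplicativity with $w(\sgn) = 1+v$. The only cosmetic difference is that you spell out $w(\mathbbm{1})=1$ and the vanishing of higher SWCs of $\sgn$, which the paper leaves implicit.
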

\begin{proof}
We can write
$$\pi=a1\oplus b(\sgn)$$
for non-negative integers $a,b$. From the multiplicativity of SWCs, we obtain
$$w(\pi)=(1+v)^b.$$
To express $b$ in character values, consider the equations: 
\begin{align*}
\chi_\pi(1)&=a+b\\
\chi_\pi(-1)&=a-b
\end{align*}
giving $b=\cfrac12(\chi_\pi(1)-\chi_\pi(-1))=b_\pi.$\\
\end{proof}

Let $C_2^n$ be the $n$-fold product of $C_2$, with projection maps $\pr_i: C_2^n\to C_2$ for $1 \leq i \leq n$. By K\"{u}nneth, we have 
$$H^*(C_2^n)=\z/2\z[v_1,\dots,v_n]$$
where we put  $v_i=w_1(\sgn\circ\pr_i)$. 
\subsection{SWCs for Virtual Representations}\label{vr}

A virtual representation of $G$ can be thought as a difference $\pi=\pi_1 \ominus \pi_2$, for representations $\pi_1,\pi_2$. When the $\pi_i$ are orthogonal, one may define the SWC of $\pi$ as 
\beq
w(\pi)=w(\pi_1) \cup w(\pi_2)^{-1},
\eeq
but we must ``complete'' the cohomology ring so that the inversion makes sense. 

More formally, let $G$ be a finite group, and let $\RO(G)$ be the free abelian group generated by the isomorphism classes of OIRs (following  \cite[Chapter II, Section 7]{BrokerDieck}).  Members of $\RO(G)$ are called \emph{virtual orthogonal representations} of $G$.

Write \begin{equation*}
H^\bullet(G)=\prod_i H^i(G),
\end{equation*}
for the complete mod $2$ cohomology ring of $G$, consisting of all formal infinite series $\alpha_0+\alpha_1+ \cdots$, with $\alpha_i \in H^i(G)$. (Please see \cite[page 44]{weibel2013k}.) Each $w(\pi)$ is invertible in this ring.

It is clear now that we may ``extend'' $w$ to a group homomorphism, taking values in the unit group of $H^\bullet(G)$, i.e.,
\begin{equation} \label{extend}
 w: \RO(G) \to H^\bullet(G)^\times.
 \end{equation}

Later in this paper we shall determine the image of $w$ for $G=\SL(2,q)$.

   \section{Quaternion Groups}\label{quatsec}
   
 Our proof of Theorem \ref{prop} involves subgroups of $\SL(2,q)$ isomorphic to certain quaternion groups.  In this section we review the group cohomology of the quaternion group of order $8$, and the so-called ``generalized quaternions" $Q_{2^n}$. This will play a vital role in our detection theorem.
   
  \subsection{Orthogonal Representations of $Q$} 
Let $Q$ be the Quaternion group of order 8. If we write $\mb H$ for the familiar real division algebra of quaternions, then $Q$ can be quickly described as the subgroup of $\mb H^\times$ generated by $i$ and $j$.

There are 4 one-dimensional representations of $Q$, which we denote by 1, $\chi_1,\chi_2,\chi_3=\chi_1\otimes\chi_2$. Each one is orthogonal.
The group $Q$ also possesses a unique 2-dimensional irreducible representation 
$$\rho:Q\rightarrow\SL(2,\cc)$$
defined by
\begin{equation*}\label{rhoij}\rho(i)=\begin{pmatrix}
i&&0\\
0&&-i
\end{pmatrix} \text{  ,  }\; \rho(j)=\begin{pmatrix}
0&&1\\
-1&&0
\end{pmatrix}.
\end{equation*}
 
 Of course, $\hh$ can be described as the algebra of complex matrices of the form 
$$\begin{pmatrix}
s+ti & u+vi \\
-u+vi& s-ti
\end{pmatrix} \text{ where } s,t,u,v\in \rr.$$  Since both $\rho(i)$, $\rho(j)$ are of this form,  $\rho$ is an injection of $Q$  into  $\hh^1$, the subgroup of norm $1$ quaternions. In particular, $\rho$ is symplectic.
Thus, $S(\rho)=\rho\oplus\rho$ is the only OIR of $Q$, which is not irreducible.

\subsection{The Cohomology ring $H^*(Q)$}\label{cohoQ} 
The SWCs of orthogonal representations of $Q$ are certain elements in $H^*(Q)$. Here we give a description of the cohomology ring $H^*(Q)$ with \cite[Chapter IV]{Milgram} as \enlargethispage{\baselineskip} our reference.

Recall that $w_1$ gives an isomorphism between $\Hom(Q,\pm1)=\{1,\chi_1,\chi_2,\chi_3\}$ and $H^1(Q)$. We define
\begin{align*}
x&:=w_1(\chi_1), \text{ and} \\
y&:=w_1(\chi_2).
\end{align*} 

The cohomology group $H^4(Q)$ is one-dimensional over $\z/2\z$, and we write `$e$' for its nonzero element.

\begin{prop}[\cite{Milgram}, Section V.1]\label{cohomQ}
 The cohomology ring of $Q$ is
$$H^*(Q)\cong \z/2\z[x,y,e]/{(xy+x^2+y^2,x^2y+xy^2)}.$$
\end{prop}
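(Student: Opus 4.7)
The plan is to compute $H^*(Q;\z/2\z)$ via the Lyndon-Hochschild-Serre (LHS) spectral sequence associated to the central extension
\beq
1 \to Z \to Q \to Q/Z \to 1,
\eeq
where $Z = Z(Q) = \{\pm 1\} \cong C_2$ and $Q/Z \cong C_2 \times C_2$. Since $Z$ is central, the $Q/Z$-action on $H^*(Z;\z/2\z)$ is trivial, so the $E_2$-page factors as
\beq
E_2^{p,q} = H^p(Q/Z;\z/2\z) \otimes H^q(Z;\z/2\z) \cong \z/2\z[a,b] \otimes \z/2\z[v],
\eeq
where $a,b \in E_2^{1,0}$ are inflated from $H^1(Q/Z;\z/2\z)$ and $v \in E_2^{0,1}$ generates $H^1(Z;\z/2\z)$. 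The inflated classes $a, b$ will be identified with the target generators $x, y$ upon convergence.

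The first key step is to identify the transgression $d_2(v) \in H^2(Q/Z;\z/2\z)$, which equals the mod $2$ reduction of the extension class. Taking lifts $i, j, k \in Q$ of the three non-trivial elements of $Q/Z$ and using the relations $i^2 = j^2 = k^2 = -1$ in $\hh^\times$, the squaring map $q(g) = \tilde g^2$ on $Q/Z$ takes the value $-1$ on every non-zero vector. This is the anisotropic quadratic form on $\F_2^2$, and comparing with the basis $\{a^2, ab, b^2\}$ of $H^2(C_2 \times C_2;\z/2\z)$ identifies $d_2(v) = a^2 + ab + b^2$. This yields the first relation $xy + x^2 + y^2 = 0$.

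Next, I apply Kudo's transgression theorem: for $v \in E_2^{0,1}$ with $d_2(v) = \tau$, one has $d_3(v^2) = \Sq^1(\tau)$. Using the Cartan formula, $\Sq^1(a) = a^2$, and the Adem relation $\Sq^1 \Sq^1 = 0$, I compute
$$d_3(v^2) = \Sq^1(a^2 + ab + b^2) = a^2 b + a b^2,$$
giving the second relation $x^2 y + xy^2 = 0$. Iterating Kudo yields $d_5(v^4) = \Sq^2(a^2 b + ab^2) = a^4 b + a b^4 = ab(a^3 + b^3)$, which vanishes in $E_5$ because $a^3 = a(ab + b^2) = a^2 b + ab^2$ is already zero there (and likewise $b^3 = 0$). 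Hence $v^4$ is a permanent cycle, producing the generator $e \in H^4(Q)$.

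The spectral sequence then collapses at $E_4$, and comparing Poincar\'e series on both sides---each equal to $(1 + 2t + 2t^2 + t^3)/(1 - t^4)$---confirms the stated presentation. The main obstacle is the explicit identification of the extension class in Step~2; this rests on the observation that every element of $Q \setminus Z$ has order $4$, which distinguishes $Q$ from the dihedral group $D_8$ (whose extension class is merely $ab$). The $4$-periodicity used at the end may alternatively be derived geometrically from the free action of $Q \subset \hh^\times$ on the unit sphere $S^3$.
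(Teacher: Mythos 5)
Your proof is correct, but note there is nothing in the paper to compare it against: the paper does not prove this proposition, it simply quotes the presentation from \cite{Milgram}. Your argument --- the LHS spectral sequence of the central extension $1\to Z\to Q\to Q/Z\to 1$, the identification of $d_2(v)$ with the extension class $a^2+ab+b^2$ via the anisotropic quadratic form $\bar g\mapsto \tilde g^2$ (every element of $Q\setminus Z$ has order $4$), and Kudo's theorem giving $d_3(v^2)=\Sq^1(a^2+ab+b^2)=a^2b+ab^2$ and then $d_5(v^4)=\Sq^2\Sq^1(a^2+ab+b^2)=ab(a^3+b^3)\equiv 0$ --- is essentially the standard computation carried out in the cited reference itself, and all the individual computations check out (in particular $a^2b+ab^2\notin(\sigma)$, so $d_3$ is genuinely nonzero, and $a^3=a^2b+ab^2=0$ in $E_5$ as you say). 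Two small points are worth making explicit to seal the argument. First, you pass from $E_4$ to $E_5$ without comment; this needs $d_4=0$, which holds for degree reasons because $d_2$ is injective on odd rows ($\sigma$ is a nonzerodivisor in $\z/2\z[a,b]$, as $\F_2[a,b]/(\sigma)$ is a domain), so $E_3$ and hence $E_4$ are concentrated in even rows. Second, the concluding Poincar\'e-series comparison identifies $H^*(Q)$ with the stated ring only once you know the relations hold exactly in $H^*(Q)$, not merely in the associated graded: this is automatic here because $x,y$ are honest inflations and $x^2+xy+y^2$, $x^2y+xy^2$ are inflations of classes killed by $d_2$ and $d_3$, hence lie in the kernel of inflation; together with the fact that $a,b,v^4$ generate $E_\infty=E_4$ multiplicatively, the usual filtration argument gives a surjection from your presented ring onto $H^*(Q)$, and the equality of Poincar\'e series (which uses that $(\sigma,\Sq^1\sigma)$ is a regular sequence in $\z/2\z[x,y]$) forces it to be an isomorphism. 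Your closing alternative --- deriving the $4$-periodicity from the free action of $Q\subset\hh^\times$ on $S^3$ --- is likewise the periodicity viewpoint taken in \cite{Milgram}, so either route is faithful to the source the paper relies on.
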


The first few cohomology groups of $Q$ are as follows:
\begin{align*}
H^0(Q)&=\{0,1\}\\
H^1(Q)&=\{0, x, y,x+y\}\\
H^2(Q)&=\{0, x^2,y^2,x^2+y^2\}\\
H^3(Q)&=\{0, x^2y\}\\
H^4(Q)&=\{0, e\}.
\end{align*}
Note that $x^3=y^3=0$. Moreover, one can obtain the higher cohomology groups via the cup product with $e$ due to Proposition \ref{cohomQ}.  This means
 for $i\geq0,$ the map
\begin{align*}
H^i(Q)&\to H^{i+4}(Q)\\
z&\mapsto z\cup e
\end{align*}
is an isomorphism of groups.

\subsection{SWCs of Representations of  $Q$}\label{SWquat}
We first compute the total SWCs of OIRs of $Q$.
For the linear orthogonal representations of $Q$, it is clear that 
$w(1)=1,$ $w(\chi_1)=1+x$, $w(\chi_2)=1+y$, and $ w(\chi_1\otimes\chi_2)=1+x+y$ with $x,y\in H^1(Q)$  from above.

\begin{lemma}\label{Srho}
The total SWC of $S(\rho)$ is $$w(S(\rho))=1+e,$$ where $e$ is the non-trivial element of $H^4(Q)$.
\end{lemma}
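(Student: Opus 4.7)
The plan is to pin down $w(S(\rho))$ by first cutting down the degrees where it could be nonzero, and then using restriction to the center to decide the top degree.

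First I would invoke Lemma \ref{spi12}. Since $\rho$ maps into $\SL(2,\C)$, we have $\det\rho=1$, so the lemma yields $w_1(S(\rho))=w_2(S(\rho))=w_3(S(\rho))=0$. Because $S(\rho)$ has degree $4$, the total SWC therefore reduces to
\begin{equation*}
w(S(\rho))=1+w_4(S(\rho)),
\end{equation*}
with $w_4(S(\rho))\in H^4(Q)=\{0,e\}$. So the only remaining task is to rule out $w_4(S(\rho))=0$.

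Next I would use functoriality \eqref{funct.w} together with the inclusion $Z=\{\pm 1\}\hookrightarrow Q$ of the center. A direct check gives $\rho(-\mathbbm{1})=-I$, and since $\rho^\vee\cong\rho$ for any two-dimensional representation with trivial determinant, we obtain $S(\rho)\big|_Z\cong 4\,\sgn$. By Lemma \ref{c2swc},
\begin{equation*}
w\bigl(S(\rho)\big|_Z\bigr)=(1+v)^4=1+v^4
\end{equation*}
in $H^*(C_2)=\Z/2\Z[v]$, where the middle binomial terms vanish in characteristic $2$.

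Finally, by functoriality the restriction of $w_4(S(\rho))$ to $H^4(Z)$ equals $v^4\neq 0$, so $w_4(S(\rho))$ cannot be zero in $H^4(Q)$. Since $H^4(Q)=\{0,e\}$, this forces $w_4(S(\rho))=e$, proving $w(S(\rho))=1+e$. There is no real obstacle here beyond spotting the right detecting subgroup; the key observation is that $\det\rho=1$ kills the lower-degree SWCs via Lemma \ref{spi12}, so a single character computation on the order-two center suffices to identify the top class.
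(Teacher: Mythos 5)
Your proof is correct and follows essentially the same route as the paper: kill $w_1,w_2,w_3$ via Lemma \ref{spi12} using $\det\rho=1$, then restrict to the center $Z=\{\pm 1\}$, where $S(\rho)$ becomes $4\,\sgn$ and $(1+v)^4=1+v^4\neq 0$ forces $w_4(S(\rho))=e$. The only cosmetic difference is that you justify $S(\rho)|_Z\cong 4\,\sgn$ via self-duality $\rho^\vee\cong\rho$, while the paper simply uses $S(\rho)=\rho\oplus\rho$ (as $\rho$ is symplectic) together with $\res^Q_Z\rho=\sgn\oplus\sgn$.
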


\begin{proof}
From Lemma \ref{spi12}, we have $$w_i(S(\rho))=0\text{ for }i=1,2,3.$$ This gives $w(S(\rho))=1+w_4(S(\rho))$.
We now prove that $w_4(S(\rho))=e\in H^4(Q).$\\

\noindent Let $Z$ be the center of $Q$, which is $\{ \pm 1\}$. Since
\begin{align*}
\rho(-1)= \begin{small}\begin{pmatrix}
-1& 0\\
0&-1
\end{pmatrix},\end{small}
\end{align*}
we have the restriction $\res_{Z}^Q\rho=\sgn\oplus\sgn,$
where $\sgn:Z\rightarrow \cc^\times$ is the linear character of order $2$.

From \eqref{c2}, we have $H^*(Z)\cong \z/2\z[v]$ where $v=w_1(\sgn)$. Then, the multiplicativity of SWCs gives
\begin{align*}
w(\res_{Z}^QS(\rho))=w(\sgn)^4=(1+v)^4=1+v^4,
\end{align*}
which implies $w_4(S(\rho))\neq0$. Therefore, $w_4(S(\rho))=e$, since this is the only nonzero element in $H^4(Q)$. 
\end{proof} 

We also see from the above proof that $e$ maps to $v^4$ under the restriction $H^4(Q)\to H^4(Z)$. From this, we deduce that the restriction maps in degrees divisible by $4$,
 \begin{equation} \label{qd}
\begin{split}
H^{4i}(Q)&\rightarrow H^{4i}(Z),\\
e^i&\mapsto v^{4i}
\end{split}
\end{equation}
are isomorphisms.

Let $ \pi$ be an orthogonal representation of $Q$. By \eqref{decorth}, we can write
$$\pi \cong m_01\oplus m_1\chi_1\oplus m_2\chi_2\oplus m_2 \chi_3\oplus m_4 S(\rho),$$
where $m_i$ are non-negative integers.
\begin{lemma}
Let $\pi$ be as above. Then,  for $0\leq4i\leq\deg\pi$, we have $$w_{4i}(\pi)=\binom{m_4}{i}e^i,$$ where $m_4=\frac18(\chi_{\pi}(1)-\chi_{\pi}(-1))$.
\end{lemma}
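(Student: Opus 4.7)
The plan is to exploit multiplicativity of the total SWC, the functoriality under restriction to the center $Z$, and the isomorphism in (\ref{qd}).

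First, by multiplicativity (\ref{p4}) together with the values computed in Section \ref{SWquat} and Lemma \ref{Srho},
\beq
w(\pi)=(1+x)^{m_1}(1+y)^{m_2}(1+x+y)^{m_3}(1+e)^{m_4}.
\eeq
To isolate the degree-$4i$ piece, I would restrict to $Z=\{\pm 1\}$. Each linear character $\chi_k$ ($k=1,2,3$) factors through the abelianization $Q/Z$, so $\res^Q_Z \chi_k$ is trivial, and by functoriality (\ref{funct.w}), the restriction sends $w(\chi_k)$ to $1$. Meanwhile, as in the proof of Lemma \ref{Srho}, $\res^Q_Z S(\rho)\cong 4\,\sgn$, so $\res^Q_Z w(S(\rho))=(1+v)^4=1+v^4$. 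Combining,
\beq
\res^Q_Z w(\pi)=(1+v^4)^{m_4}=\sum_{i\ge 0}\binom{m_4}{i}v^{4i}.
\eeq

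Next, I would pick out degree $4i$: the image of $w_{4i}(\pi)$ in $H^{4i}(Z)$ equals $\binom{m_4}{i}v^{4i}$. By (\ref{qd}), the restriction $H^{4i}(Q)\to H^{4i}(Z)$ is an isomorphism sending $e^i\mapsto v^{4i}$, so $w_{4i}(\pi)=\binom{m_4}{i}e^i$, as desired.

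It remains to express $m_4$ as a character value. Since $\chi_k(-1)=1$ for $k=0,1,2,3$ (the linear characters are trivial on $Z$) and $\chi_{S(\rho)}(-1)=2\chi_\rho(-1)=-4$, evaluating the character of the decomposition at $\pm 1$ gives
\beq
\chi_\pi(1)=m_0+m_1+m_2+m_3+4m_4,\qquad \chi_\pi(-1)=m_0+m_1+m_2+m_3-4m_4,
\eeq
so $m_4=\tfrac{1}{8}(\chi_\pi(1)-\chi_\pi(-1))$. There is no real obstacle here: the mild point is simply to notice that restriction to $Z$ kills all the linear contributions and, together with (\ref{qd}), reduces everything to the trivial elementary-abelian computation of Lemma \ref{c2swc}.
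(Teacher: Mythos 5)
Your proof is correct, but it takes a genuinely different route from the paper's. The paper argues entirely inside $H^*(Q)$: it writes $w(\pi)=P(x,y)(1+e)^{m_4}$, where $P(x,y)=(1+x)^{m_1}(1+y)^{m_2}(1+x+y)^{m_3}$ lies in the six-dimensional subalgebra $S=\z/2\z[x,y]/(xy+x^2+y^2,\,x^2y+xy^2)$, whose nonconstant part is concentrated in degrees $1,2,3$; since $H^*(Q)$ is free over $\z/2\z[e]$ with basis in those degrees, comparing graded pieces immediately gives $w_{4i}(\pi)=\binom{m_4}{i}e^i$. You instead restrict to the center and run a detection argument: restriction is grade-preserving, so $\res\big(w_{4i}(\pi)\big)$ is the degree-$4i$ piece of $(1+v^4)^{m_4}$, namely $\binom{m_4}{i}v^{4i}$, and the injectivity of $H^{4i}(Q)\to H^{4i}(Z)$ with $e^i\mapsto v^{4i}$ --- which is exactly \eqref{qd}, established just before the lemma --- pins down $w_{4i}(\pi)$. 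Each step of yours checks out: the linear characters are trivial on $Z=[Q,Q]$, $\res^Q_Z S(\rho)\cong 4\,\sgn$ as in the proof of Lemma \ref{Srho}, and the character computation of $m_4$ is identical to the paper's. What your route buys is economy (no expansion of $P(x,y)$, no use of the ring presentation of $H^*(Q)$ beyond \eqref{qd}) and it anticipates precisely the method the paper deploys later for $\SL(2,q)$ itself in the proof of Theorem \ref{thm2.section}. What it gives up is information in degrees $\not\equiv 0 \ (\mathrm{mod}\ 4)$: restriction to $Z$ cannot see those components (the map is not injective there), whereas the paper's in-ring computation also controls them via $P(x,y)$ --- immaterial here, since the lemma only claims the degree-$4i$ classes.
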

\begin{proof}
Let us consider $$S= \z/2\z[x,y]/{( xy+x^2+y^2,x^2y+xy^2)},$$ which is a $6$-dimensional subalgebra of $H^*(Q)$. By the multiplicativity of SWCs, we have
\begin{align*}
w(\pi)&=w(\chi_1)^{m_1}\cup w(\chi_2)^{m_2}\cup w(\chi_3)^{m_3}\cup w(S(\rho))^{m_4}\\
&=\underbrace{(1+x)^{m_1}(1+y)^{m_2}(1+x+y)^{m_3}}_{\quad\in \:S}(1+e)^{m_4}.
\end{align*}
Therefore, we can write $(1+x)^{m_1}(1+y)^{m_2}(1+x+y)^{m_3}$ as a polynomial of the form $P(x,y)=1+Ax+By+Cx^2+Dy^2+Exy^2$ with coefficients in $\z/2\z,$ and  \begin{align*}
w(\pi)&=P(x,y)(1+e)^{m_4}\\
&=P(x,y)\sum_{i=0}^{m_4}\binom{m_4}{i}e^i
\end{align*}
where $e^i\in H^{4i}(Q)$ for each $i$. Comparing degrees gives

$$w_{4i}(\pi)=\binom{m_4}{i}e^i.$$
To express $m_4$ in terms of character values, we simply evaluate $\chi_{\pi}$ at 1 and $-1$. Since $-1\in $Ker$(\chi_i)$ for $i=1,2,3$ and $\chi_\rho(1)=2$, $\chi_\rho(-1)=-2$, we have the equations:
\begin{align*}
\chi_\pi(1)&=m_0+m_1+m_2+m_3+4m_4,\\
\chi_\pi(-1)&=m_0+m_1+m_2+m_3-4m_4,
\end{align*}
and so $m_4=\frac18(\chi_{\pi}(1)-\chi_{\pi}(-1))$.
\end{proof}
\subsection{Generalized Quaternions}
The construction of the quaternion group $Q$ generalizes to give a family of non-abelian groups which have the presentation
$$Q_{2^n}:=\langle a,b\mid   a^{2^{n-2}}=b^2, b^4=1,bab^{-1}=a^{-1} \rangle\: ;\: n\geq3.$$
These groups are called \textit{generalized quaternion groups} and have order $2^n$. 
Again, there are 4 linear (orthogonal) representations of $Q_{2^n}$, say $1,\psi_1,\psi_2,\psi_3.$ These can be defined on the generators of $Q_{2^n}$ as:
\begin{equation}
\label{psi}
 \begin{alignedat}{4}
\psi_1(a)&=-1\hspace{3mm}&,\hspace{3mm}& \psi_1(b) &=\;\;\;1\\
\psi_2(a)&=-1\hspace{3mm}&,\hspace{3mm}& \psi_2(b)&=-1\\
\psi_3(a)&=1\hspace{3mm}&,\hspace{3mm}& \psi_3(b)\:&=-1.
\end{alignedat}
\end{equation}
Let $\zeta=e^{2\pi i/2^{n-1}}$. There is an irreducible 2-dimensional representation $\varrho$ of $Q_{2^n}$   defined by:
$$\varrho(a)=\begin{small}\begin{pmatrix}
\zeta&&0\\
0&&\bar{\zeta}
\end{pmatrix}\end{small}\quad,\quad\varrho(b)= \begin{small}\begin{pmatrix}
0&&1\\
-1&&0
\end{pmatrix}.\end{small}$$
As earlier, $\varrho$ maps  $Q_{2^n}$ into $\hh^1$, so it is symplectic. 

Let $n>3$. We use the isomorphism 
\begin{align*}
w_1: \Hom(Q_{2^n},\pm 1) &\xrightarrow{\cong} H^1(Q_{2^n})\\
\{1,\psi_1,\psi_2,\psi_3\}&\leftrightarrow \{0,X,Y,X+Y\}.
\end{align*}
to define  $X=w_1(\psi_1)$ and $Y=w_1(\psi_2)$. 

A description of the mod $2$ cohomology ring of generalized quaternions $Q_{2^n}$ for $n>3$ in  \cite[Chapter IV, Lemma 2.11]{Milgram} is as follows:
\begin{equation}\label{CQn}
H^*(Q_{2^n})\cong \z/2\z[X,Y,E]/(XY,X^3+Y^3),
\end{equation}
where $E\in H^4(Q_{2^n})$.
\begin{lemma}\label{crgq}
We have $E=w_4(S(\varrho))$ in $H^*(Q_{2^n}).$
\end{lemma}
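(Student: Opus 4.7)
The plan is to imitate the proof of Lemma \ref{Srho} essentially verbatim. Since $\varrho$ lands in $\hh^1 \subset \SL(2,\cc)$, we have $\det \varrho = 1$, so Lemma \ref{spi12} immediately gives $w_i(S(\varrho)) = 0$ for $i = 1, 2, 3$. Thus $w(S(\varrho)) = 1 + w_4(S(\varrho)) + \cdots$, and the task reduces to showing $w_4(S(\varrho)) = E$ in $H^4(Q_{2^n})$.

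First I would pin down $H^4(Q_{2^n})$ from the presentation \eqref{CQn}. The relation $XY = 0$ kills any monomial in which both $X$ and $Y$ appear, while $X^3 = Y^3$ together with $XY = 0$ gives $X^4 = X \cdot Y^3 = 0$ and similarly $Y^4 = 0$. Hence $H^4(Q_{2^n}) = \z/2\z \cdot E$, so $w_4(S(\varrho))$ is either $0$ or $E$.

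To show it is nonzero, I would restrict to the center $Z$ of $Q_{2^n}$, which is generated by $a^{2^{n-2}} = b^2$. With $\zeta = e^{2\pi i/2^{n-1}}$, one computes $\zeta^{2^{n-2}} = -1$, so
\beq
\varrho(a^{2^{n-2}}) = \diag(\zeta^{2^{n-2}},\bar\zeta^{2^{n-2}}) = -\Id,
\eeq
which yields $\res^{Q_{2^n}}_Z \varrho = \sgn \oplus \sgn$ and thus $\res^{Q_{2^n}}_Z S(\varrho) = \sgn^{\oplus 4}$. Writing $H^*(Z) = \z/2\z[v]$ with $v = w_1(\sgn)$, multiplicativity of SWCs gives
\beq
w(\res^{Q_{2^n}}_Z S(\varrho)) = (1+v)^4 = 1 + v^4.
\eeq
By functoriality \eqref{funct.w}, the image of $w_4(S(\varrho))$ under the restriction $H^4(Q_{2^n}) \to H^4(Z)$ is $v^4 \neq 0$. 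Therefore $w_4(S(\varrho)) \neq 0$, and by the previous paragraph it must equal $E$.

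I do not anticipate a serious obstacle: the main point to verify carefully is the identification of $H^4(Q_{2^n})$ from the presentation, which is a short computation in the quotient ring, and the central restriction calculation, which boils down to checking $\zeta^{2^{n-2}} = -1$. As a byproduct, the argument shows that the restriction $H^{4i}(Q_{2^n}) \to H^{4i}(Z)$ sends $E^i \mapsto v^{4i}$, the analogue of \eqref{qd} for generalized quaternions.
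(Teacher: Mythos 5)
Your proposal is correct and is exactly the paper's intended argument: the paper's proof of Lemma~\ref{crgq} simply says it ``can be proved precisely as in Lemma~\ref{Srho},'' and you have carried out that imitation faithfully, supplying the details (the identification $H^4(Q_{2^n})=\{0,E\}$ from the relations $XY=0$, $X^3=Y^3$, and the central restriction via $\zeta^{2^{n-2}}=-1$) that the paper leaves implicit. Your byproduct observation that $E^i\mapsto v^{4i}$ under restriction to $Z$ likewise matches what the paper later derives en route to Proposition~\ref{gqdp}.
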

\begin{proof}
This can be proved precisely as in Lemma \ref{Srho}; again $w(S(\varrho))=1+E$.
\end{proof}

\begin{remark}
Let $n>3$. Let  $i:Q\rightarrow Q_{2^n}$ be any group homomorphism. Then, 
\begin{align*}i^*:H^3(Q_{2^n})&\rightarrow H^3(Q)
\end{align*}
is  the zero map. This can be seen as follows. 
\end{remark}
\noindent From \eqref{CQn}, we can deduce $H^3(Q_{2^n})=\{0,X^3\}$ and since $i^*$ is a ring homomorphism, we have the image $i^*(X)\in H^1(Q) =\{0,x,y,x+y\}$. But $x^3=y^3=(x+y)^3=0$  in $H^*(Q)$, which implies
\begin{align*}i^*(X^3)=(i^*(X))^3=0.{\qed}\end{align*}

Consider the subgroup $Q^{(1)}=\langle a^{2^{n-3}},b\rangle\leqslant Q_{2^n}$. This subgroup is isomorphic to $Q$ by $i \leftrightarrow a^{2^{n-3}}$ and $j \leftrightarrow b$. With this identification, it is easy to see that
\beq
\res_Q^{Q_{2^n}} \varrho = \rho.
\eeq
Write $\iota_Q$ for this inclusion of $Q$ into $Q_{2^n}$. Now
\beq
\begin{split}
\iota_Q^*(E) &=\iota_Q^*(w_4(\varrho)) \\
		&=w_4(\res_Q^{Q_{2^n}} \varrho) \\
		&=w_4(\rho) \\
		&=e. \\
		\end{split}
		\eeq
This means the homomorphism $\iota^*_Q:H^*(Q_{2^n})\to H^*(Q)$, is an isomorphism in degree $4$. Therefore:

\begin{prop}\label{gqdp}With notation as above, we have isomorphisms 
\begin{equation}\label{gqd}
\begin{split}
{\iota_Q^*}:H^{4i}(Q_{2^n})&\rightarrow H^{4i}(Q)\\
E^i&\mapsto e^i
\end{split}
\end{equation}
\enlargethispage{\baselineskip}
for all $i\geq 0$.
\end{prop}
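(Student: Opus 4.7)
The plan is to exploit the already-observed identity $\iota_Q^*(E) = e$ together with the fact that $\iota_Q^*$ is a graded ring homomorphism; this automatically yields $\iota_Q^*(E^i) = (\iota_Q^*(E))^i = e^i$ for every $i \geq 0$. The remaining task is purely computational: I need to show that both $H^{4i}(Q)$ and $H^{4i}(Q_{2^n})$ are one-dimensional over $\Z/2\Z$, with $e^i$ and $E^i$ as the respective nonzero elements.

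For the target, Proposition~\ref{cohomQ} implies, as already noted in Section~\ref{cohoQ}, that cup-product with $e$ gives an isomorphism $H^j(Q) \xrightarrow{\cong} H^{j+4}(Q)$ for all $j \geq 0$. Iterating starting from $H^0(Q) = \Z/2\Z$ therefore gives $H^{4i}(Q) = \Z/2\Z \cdot e^i$.

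For the source, I would read off the degree-$4i$ piece directly from the presentation \eqref{CQn}. A general monomial has the form $X^a Y^b E^k$ with $a + b + 4k = 4i$. The relation $XY = 0$ kills every monomial with $a \geq 1$ and $b \geq 1$. Combined with $X^3 = Y^3$, this forces $X^4 = X \cdot X^3 = X \cdot Y^3 = (XY)Y^2 = 0$, and similarly $Y^4 = 0$; consequently every pure power $X^m$ or $Y^m$ with $m \geq 4$ vanishes. Since $a + b$ must be a multiple of $4$, the only surviving option is $a = b = 0$ and $k = i$, so $H^{4i}(Q_{2^n}) = \Z/2\Z \cdot E^i$.

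Putting the two dimension counts together with $\iota_Q^*(E^i) = e^i \neq 0$ gives the claimed isomorphism in each degree $4i$. I do not anticipate a real obstacle; the only point requiring care is a clean verification that every monomial involving a positive power of $X$ or $Y$ is annihilated in degree $4i$, which falls out immediately from the two relations in \eqref{CQn}.
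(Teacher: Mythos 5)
Your proposal is correct and takes essentially the same approach as the paper: the paper likewise deduces the proposition from the identity $\iota_Q^*(E)=e$ (established just before the statement via functoriality of SWCs and $\res^{Q_{2^n}}_Q \varrho = \rho$), combined with the facts that $\iota_Q^*$ is a ring homomorphism and that $H^{4i}$ of each group is one-dimensional, spanned by $E^i$ and $e^i$ respectively. The only difference is that you make explicit the dimension counts --- the $e$-periodicity of $H^*(Q)$ and the monomial analysis in the presentation \eqref{CQn} --- which the paper leaves implicit, and your verification of both is accurate.
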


\section{$\protect\SL(2,q)$ with $q$ odd }\label{odd}
 Let $G=\SL(2,q)$ throughout this section, with $q$ odd.  Write `$\mathbbm{1}$' for the identity matrix; then the center $Z$ of $G$ is  $\{\pm\mathbbm{1}\}$. 
 We begin with a description of the mod $2$ cohomology of $G$.
\begin{prop}[\cite{fiedorowicz2006homology}, Chapter VI, Sec. 5]\label{crsl2}
When $q$ is odd, the mod $2$ cohomology ring of $\SL(2,q)$ is
$$H^*(\SL(2,q))\cong \z/2\z[\mathfrak{e}]\otimes \z/2\z[\mathfrak{b}]/\langle \mathfrak{b}^2\rangle$$
\enlargethispage{\baselineskip}
with $\deg(\mathfrak{b})=3$, $\deg(\mathfrak{e})=4$.
\end{prop}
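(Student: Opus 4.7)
The plan is to compute $H^*(G;\z/2\z)$ by analyzing the Sylow $2$-subgroup. First, using $|G|=q(q^2-1)$, I would identify a Sylow $2$-subgroup $P\leq G$ as a generalized quaternion group $Q_{2^n}$, where $2^n$ is the $2$-part of $q^2-1$; this is classical and follows from the Dickson/Suzuki description of the subgroup structure of $\SL(2,q)$ via the split and nonsplit tori together with the Weyl element $\begin{pmatrix}0 & -1\\ 1 & 0\end{pmatrix}$. Since $q^2-1\equiv 0\pmod 8$ for odd $q$, we have $n\geq 3$, so the Sylow is one of the groups already treated in Section~\ref{quatsec}, with cohomology at hand in Proposition~\ref{cohomQ} and \eqref{CQn}.

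Next I would invoke the Cartan--Eilenberg stable elements theorem: because $[G:P]$ is odd, the restriction
\beq
\res^G_P\colon H^*(G;\z/2\z)\hookrightarrow H^*(P;\z/2\z)
\eeq
is injective, and its image is the subring of elements stable under $G$-fusion. By Alperin's fusion theorem, for $P$ generalized quaternion the fusion is controlled by $N_G(P)/P$ acting on $P$, so the task reduces to computing these invariants inside the explicit presentation of $H^*(P;\z/2\z)$.

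When $n=3$, the quotient $N_G(P)/P$ is cyclic of order $3$, realized by an element of $\SL(2,q)$ that cyclically permutes the three order-$4$ subgroups $\langle i\rangle,\langle j\rangle,\langle ij\rangle$ of $Q$; it correspondingly permutes $\{x,y,x+y\}$ cyclically on $H^1(Q)$. The invariants in $H^*(Q)$ are then $0$ in degrees $1$ and $2$, spanned by $x^2y$ in degree $3$, and spanned by $e$ in degree $4$. Using $(x^2y)^2=0$ (a consequence of $x^3=y^3=0$) and the periodicity of $H^*(Q)$ via cup product with $e$ (Proposition~\ref{cohomQ}), the invariant subring is precisely $\z/2\z[\mathfrak{e}]\otimes\z/2\z[\mathfrak{b}]/\langle\mathfrak{b}^2\rangle$ with $\mathfrak{b}=x^2y$ of degree $3$ and $\mathfrak{e}=e$ of degree $4$, as claimed. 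For $n>3$ the fusion possibly swaps $X$ and $Y$ but fixes $X^3=Y^3\in H^3(Q_{2^n})$ and $E\in H^4(Q_{2^n})$, and the analogous computation yields the same final ring with $\mathfrak{b}$ corresponding to $X^3$.

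The main obstacle is pinning down the image of $N_G(P)$ in $\Aut(P)$ in both cases: for $n=3$ one must exhibit an order-$3$ element of $\SL(2,q)$ normalizing $Q_8$ and cycling its three $C_4$ subgroups (constructed explicitly from the nonsplit torus), and for $n>3$ one must verify that no extra fusion arises beyond the obvious $\Z/2$-action. These verifications are standard but notation-heavy, which is why the cleanest route is to invoke the detailed treatment of Fiedorowicz--Priddy in \cite{fiedorowicz2006homology}, as the paper does.
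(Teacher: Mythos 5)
The paper gives no proof of this proposition at all: it is quoted verbatim from Fiedorowicz--Priddy, so your stable-elements outline is an honest attempt at a proof the paper never supplies. For $n=3$ your argument is essentially sound: since $\mathrm{Aut}(C_4)$ is a $2$-group, no proper subgroup of $Q_8$ can be essential, so fusion really is controlled by $N_G(P)$; the automizer contains the $C_3$ coming from $\SL(2,3)\leq \SL(2,q)$, its invariants vanish in degrees $1,2$ and are spanned by $x^2y$ and $e$ in degrees $3,4$ (each being the unique nonzero class there), and periodicity via cup product with $e$ plus $(x^2y)^2=0$ yields $\z/2\z[\mathfrak e]\otimes\z/2\z[\mathfrak b]/\langle\mathfrak b^2\rangle$.

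The genuine gap is the case $n>3$, where your fusion claim is backwards. Since $P$ is a Sylow $2$-subgroup, $N_G(P)/P$ has odd order, while $\mathrm{Out}(Q_{2^n})$ is a $2$-group for $n\geq 4$; hence $N_G(P)$ induces \emph{no} outer automorphisms whatsoever --- in particular your ``obvious $\Z/2$-action swapping $X$ and $Y$'' is not realized by $G$-fusion. If, as you assert, fusion were controlled by $N_G(P)/P$, the stable subring would be all of $H^*(Q_{2^n})$, forcing $H^1(G)\neq 0$, which is absurd since $\SL(2,q)$ is perfect for $q>3$ (equivalently, Frobenius' normal $2$-complement theorem would give $G$ a normal $2$-complement). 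So ``extra fusion'' does not merely ``possibly'' arise: it \emph{must}, and it comes from the essential subgroups, namely the $Q_8$-subgroups $\langle a^{2^{n-3}},b\rangle$ and $\langle a^{2^{n-3}},ab\rangle$, on which copies of $\SL(2,3)\leq\SL(2,q)$ induce automorphisms of order $3$. Imposing stability at these subgroups is what actually kills $X,Y,X^2,Y^2$ (each restricts nontrivially to at least one $Q_8$-class, landing outside the $C_3$-invariants), while $X^3$ survives precisely because every homomorphism $Q\to Q_{2^n}$ annihilates $H^3(Q_{2^n})$ --- the paper's own remark following Lemma \ref{crgq} --- and $E$ survives by Proposition \ref{gqdp}. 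Your final answer is correct, but the step you waved off as ``verify that no extra fusion arises'' is exactly where the real computation lives, and verifying it in the form you stated would fail.
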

\subsection{Detection}\label{detect}
The proof of Theorem \ref{prop} uses the following well-known detection:
\begin{lemma}[\cite{Milgram}, Cor. 5.2, Ch. II]\label{syl}
Let $H$ be a finite group. A Sylow $2$-subgroup of $H$ detects its mod $2$ cohomology.  \end{lemma}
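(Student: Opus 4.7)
The plan is to prove this standard detection result via the transfer (corestriction) map in group cohomology. Let $K$ be a Sylow $2$-subgroup of $H$, and let $i: K \hookrightarrow H$ be the inclusion, inducing the restriction map $i^*: H^*(H,\Z/2\Z) \to H^*(K,\Z/2\Z)$. The goal is to show $i^*$ is injective.

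First I would recall the existence of the transfer map $\tr_K^H: H^*(K,\Z/2\Z) \to H^*(H,\Z/2\Z)$, a homomorphism of abelian groups defined (for instance) as the corestriction in group cohomology. The essential property I would invoke is the standard identity
\[
\tr_K^H \circ i^* = [H:K] \cdot \mathrm{id}_{H^*(H,\Z/2\Z)},
\]
i.e., the composition is multiplication by the index, viewed as an element of $\Z/2\Z$.

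The next step is to observe that since $K$ is a Sylow $2$-subgroup of $H$, the index $[H:K]$ is odd, so it is a unit (namely, equal to $1$) in $\Z/2\Z$. Therefore $\tr_K^H \circ i^*$ is the identity map on $H^*(H,\Z/2\Z)$, which forces $i^*$ to be injective. This says precisely that $K$ detects the mod $2$ cohomology of $H$.

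There is no real obstacle: the only ingredient beyond elementary group theory is the existence of the transfer and its composition formula with restriction, both of which are standard foundational facts (see, e.g., \cite{Milgram}, Chapter II, or any standard reference on group cohomology). The entire argument is essentially a two-line application once these tools are cited.
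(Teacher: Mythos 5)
Your proposal is correct and is precisely the standard transfer argument: the paper itself offers no proof, citing \cite{Milgram} (Cor.\ 5.2, Ch.\ II), where this same identity $\tr_K^H \circ i^* = [H:K]\cdot \mathrm{id}$ with odd index $[H:K]$ acting as a unit on mod $2$ cohomology is the underlying proof. Nothing to add.
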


According to \cite[Chapter 2, Theorem 8.3]{gorenstein1980finite}, a Sylow $2$-subgroup of $G$ is generalized quaternion. We will therefore regard $Q_{2^n}$ as a subgroup of $G$, where of course $n$ is the highest power of $2$ 
dividing $|G|$.    We now prove our main detection theorem.

\begin{proof}[Proof of Theorem \ref{prop}]
We deduce from Proposition \ref{crsl2} that
$$H^n(G)\cong\begin{cases}
\z/2\z, & n\equiv 0,3\; (\text{mod } 4)\\
0, &\text{ otherwise}.
\end{cases}$$
Let $\pi$ be an orthogonal representation of $G$. Let $n=4k+3$ for some $k$. With $i=1$, $j=n-1$ such that $i+j=n$, we now apply the Wu formula from Proposition \ref{Wu}:
\begin{align*}
\Sq^1(w_{n-1}(\pi))&=w_1(\pi)\cup w_{n-1}(\pi)+{n-2\choose 1}w_0(\pi)\cup w_n(\pi)
\end{align*}
Here, $\Sq^1(w_{n-1}(\pi))=0$ because $w_{n-1}(\pi)\in H^{4k+2}(G)$ is zero and $\Sq^1$ is a homomorphism. Also, $w_1(\pi)=0$ and $(n-2)$ is odd which implies
$$w_n(\pi)=0 \text{ when } n\equiv 3\text{ (mod }4).$$
Therefore, the non-zero SWCs of representations of $G$ only occur  in degrees divisible by $4$, which means
$$H^*_{\SW}(G)\subseteq \z/2\z[\mathfrak{e}].$$
Since $Q_{2^n}$ is a $2$-Sylow subgroup of $G$, it detects the mod $2$ cohomology of $G$ by Lemma \ref{syl}. 
Consider the Quaternion subgroup $Q \cong Q^{(1)}\leqslant Q_{2^n}$. Note that the center of $Q$ is $Z$. By the isomorphisms \eqref{qd} and \eqref{gqd}, we obtain the following sequence of inclusions for each $i\geq 0$:
\begin{alignat*}{3}
H^{4i}(G)&\hookrightarrow H^{4i}(Q_{2^n})&&\hookrightarrow H^{4i}(Q)&&\hookrightarrow H^{4i}(Z)\\
\mathfrak{e}^i\quad&\mapsto\quad E^i&&\mapsto\quad e^i\quad&&\mapsto\quad v^{4i}.
\end{alignat*}
Therefore, the subalgebra $\z/2\z[\mathfrak{e}]$, containing the SWCs of $G$, injects into $H^*(Z)$, completing the proof.

\end{proof}

\subsection{Formulas for SWCs}\label{thms}
We now prove Theorem \ref{thm2.section} which gives an explicit formula for the total SWCs of orthogonal representations of $G$ in terms of character values.

\begin{proof}[Proof of Theorem \ref{thm2.section}]
  
To find $w(\pi)$, it is enough to work with $\res^G_Z \pi$  due to the detection Theorem \ref{prop}. This restriction has been done in two steps. We first restrict $\pi$ to the quaternion group $Q\leqslant G$, and then further from $Q$ to $Z$.
 
 With notations from Section \ref{quatsec}, it looks like
$$\res^G_Q\pi\cong m_01\oplus m_1\chi_1\oplus m_2\chi_2\oplus m_3\chi_3\oplus m_4 (S(\rho)) $$
where $m_i$ are non-negative integers. It is easy to see that 
\begin{equation*}\label{resQZ}
\begin{split}
\res^Q_Z \chi_i&\cong 1 \text{ for }i=1,2,3, \text{ and }\\
\res^Q_Z\rho&\cong \sgn\oplus \sgn.
\end{split}
\end{equation*}
Therefore, the further restriction of $\res^G_Q\pi$ to the center $Z$ will be $$\res^G_Z\pi=\res^Q_Z\res^G_Q\pi\cong (m_0+m_1+m_2+m_3)  1\oplus 4m_4 (\sgn).$$
 From Lemma \ref{c2swc}, we then obtain
$$w(\res^G_Z \pi)=(1+v)^{4m_4}=(1+v^4)^{m_4},$$
with $4m_4=\frac12(\chi_\pi(\mathbbm1)-\chi_\pi(-\mathbbm1)).$ From the proof of Theorem \ref{prop}, we have $i^*_Z(\mathfrak{e})=v^4$ where $i_Z:Z\rightarrow G$ is the inclusion. Therefore,
$$w(\pi)=(1+\mathfrak{e})^{r_\pi},$$
where $r_\pi=m_4$ is the multiplicity of $S(\rho)$ in $\res^G_Q\pi$.

\end{proof}

\begin{ex} We have
\beq
w(\reg(G))=(1+\mf e)^{\frac{1}{8}|G|}.
\eeq
\end{ex}

\subsection{Corollaries}

We can go deeper by applying the following result due to R. Gow: 
\begin{thm}[\cite{gow1985real}, Theorem 1]\label{cen1}
 Let $\pi$ be an irreducible self-dual representation of $G$ with central character $\omega_\pi.$ Then, the Frobenius-Schur Indicator $\varepsilon(\pi)$ equals $\omega_\pi(-\mathbbm1)$.
\end{thm}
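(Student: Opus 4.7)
My plan is to prove $\varepsilon(\pi) = \omega_\pi(-\mathbbm{1})$ via the Frobenius--Schur formula
\[
\varepsilon(\pi) = \frac{1}{|G|} \sum_{g \in G} \chi_\pi(g^2),
\]
reducing the identity to an explicit computation with the character table of $\SL(2,q)$. Since $-\mathbbm{1}$ is central we have $\chi_\pi(-\mathbbm{1}) = \omega_\pi(-\mathbbm{1}) \cdot \deg \pi$, so the right-hand side is read off immediately from the character table; the task is to evaluate the left.

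The first step is to enumerate the self-dual irreducibles of $G = \SL(2,q)$: the trivial, the Steinberg, the principal series $\mathrm{PS}(\alpha)$ with $\alpha^2 = 1$, the discrete series $\mathrm{DS}(\theta)$ with $\theta^2 = 1$, and the four ``halved'' representations of dimensions $(q \pm 1)/2$ that split off when $\alpha$ or $\theta$ is the quadratic character; for each I would record $\omega_\pi(-\mathbbm{1})$. The second step is to organize $\sum_g \chi_\pi(g^2)$ by conjugacy class, using that squaring sends both central classes $\{\pm\mathbbm{1}\}$ to $\{\mathbbm{1}\}$, permutes the unipotent classes $\pm u_{\pm}$ among themselves (the precise permutation depending on whether $2$ is a square in $\F_q^\times$), and sends a semisimple class with eigenvalues $\{\lambda,\lambda^{-1}\}$ to the class with eigenvalues $\{\lambda^2,\lambda^{-2}\}$. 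The third step is to plug in the standard character values (as in Fulton--Harris or Bonnaf\'e) and verify, family by family, that the sum equals $\omega_\pi(-\mathbbm{1}) \cdot |G|$.

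I expect the main obstacle to be the halved representations of dimensions $(q\pm1)/2$: their character values involve quadratic Gauss sums, and whether the relevant parameter is self-dual depends on $q \bmod 4$, so signs require careful bookkeeping. To cut down on case work I would pursue a structural route in parallel. Since $(-g)^2 = g^2$, the involution $g \mapsto -g$ acts without fixed points on $G$ and halves the Frobenius--Schur sum to a sum over a transversal of $Z$; combined with the identity $\chi_\pi(-g) = \omega_\pi(-\mathbbm{1})\chi_\pi(g)$, this should reveal $\omega_\pi(-\mathbbm{1})$ as the natural twisting sign. As a complementary remark, representations with $\omega_\pi(-\mathbbm{1}) = 1$ descend to $\mathrm{PSL}(2,q)$, whose Sylow $2$-subgroup is dihedral, and one can try to argue directly that every self-dual irreducible of that quotient is orthogonal; meanwhile the remaining ``genuine'' representations restrict nontrivially to the quaternion subgroup $Q \leqslant G$ of Section \ref{quatsec}, where the embedding $\rho \hookrightarrow \hh^1$ forces a symplectic structure, matching $\omega_\pi(-\mathbbm{1}) = -1$.
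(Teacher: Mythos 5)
The paper offers no proof of this statement at all: it is imported from \cite{gow1985real}, where Gow proves it for all symplectic groups $\Sp(2n,q)$ (of which $\SL(2,q)=\Sp(2,q)$ is the first case) by a square-root count against the central involution. A self-contained character-table verification for $\SL(2,q)$ of the kind you propose is therefore a legitimate alternative in principle, but your proposal has a gap that is more than bookkeeping: the enumeration of self-dual irreducibles in your first step is wrong. For $\SL(2,q)$ the principal series $\pi_\alpha$ is irreducible exactly when $\alpha^2\neq 1$, and since $\pi_\alpha\cong\pi_{\alpha^{-1}}=\pi_\alpha^\vee$, \emph{every} irreducible principal series is self-dual; likewise every irreducible cuspidal $\pi_\theta$ with $\theta^2\neq 1$ is self-dual because $\pi_\theta\cong\pi_{\theta^{-1}}$. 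Your conditions $\alpha^2=1$ and $\theta^2=1$ single out precisely the parameters at which these representations are \emph{reducible} (the split cases producing the four halved constituents), so the ``family by family'' check as you set it up omits the generic principal and discrete series, which carry most of the content of the theorem. (For the halved representations: when $q\equiv 3\pmod 4$ all four are non-self-dual, as their unipotent values $(\pm 1\pm\sqrt{-q})/2$ are not real, so there is nothing to verify; when $q\equiv 1\pmod 4$ they are self-dual and must be checked.) Beyond this, the verification is never carried out --- the Gauss-sum signs and the $q\bmod 4$ dependence, which you correctly identify as the crux, are exactly what you defer.

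The structural shortcuts do not close these gaps. Since $(-g)^2=g^2$, pairing $g$ with $-g$ in $\sum_g\chi_\pi(g^2)$ merely rewrites the sum as twice a half-sum; no factor of $\omega_\pi(-\mathbbm{1})$ can emerge this way. The sign in Gow's theorem comes from comparing the number of square roots of $h$ with the number of square roots of $-h$ (equivalently $\sum_g\chi_\pi(g^2)$ with $\sum_g\chi_\pi(-g^2)$), and that comparison \emph{is} the theorem, not a consequence of the fixed-point-freeness of $g\mapsto -g$. The $\mathrm{PSL}(2,q)$ remark is circular as stated: ``every self-dual irreducible of $\mathrm{PSL}(2,q)$ is orthogonal'' is exactly the $\omega_\pi(-\mathbbm{1})=1$ half of the claim, and dihedral Sylow $2$-subgroups alone do not yield it without a genuine argument. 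Finally, the quaternion restriction proves less than you assert: if $\omega_\pi(-\mathbbm{1})=-1$ then $\res^G_Q\pi\cong m\rho$ with $m=\tfrac12\deg\pi$, and an invariant symmetric nondegenerate form on $M\otimes\rho$ must have the shape $B\otimes\beta$ with $\beta$ the (alternating) invariant form on $\rho$, forcing $B$ to be alternating and nondegenerate on the multiplicity space $M$, hence $\dim M$ even. So restriction to $Q$ rules out orthogonality only when $\deg\pi\equiv 2\pmod 4$: it handles, say, the discrete series of degree $q-1$ for $q\equiv 3\pmod 4$, but not the self-dual principal series of degree $q+1$ there. As it stands the proposal is an outline resting on an incorrect enumeration and two invalid shortcuts, not a proof.
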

We simply call this equality \emph{Gow's formula}.\\

 This leads to the following:
\begin{c.intro}\label{thm1.section}  Let $\pi$ be an irreducible orthogonal representation of $G$. Then, its  total SWC  $w(\pi)=1$.
\end{c.intro}
\begin{proof} For $\pi$ irreducible orthogonal,  the sign $\varepsilon(\pi)=1$ by definition.
Therefore 
\begin{align*}
r_\pi&=\frac18(\chi_\pi(\mathbbm1)-\chi_\pi(-\mathbbm1))\\
&=\frac{\chi_\pi(\mathbbm1)}{8}(1-\omega_\pi(-\mathbbm1))\\
&=0.
\end{align*} \end{proof}
\vspace{-3pt}
Let $\pi$ be an irreducible, non-orthogonal representation of $G$. Then, for the \enlargethispage{\baselineskip} representation $S(\pi)=\pi\oplus\pi^\vee$, we have
\begin{align*}
r_{S(\pi)}&=\frac18\big(\chi_{S(\pi)}(\mathbbm1)-\chi_{S(\pi)}(-\mathbbm1)\big)\\
&=\frac14\big(\chi_\pi(\mathbbm1)-\chi_\pi(-\mathbbm1)\big)\\
&=\frac{\chi_\pi(\mathbbm1)}{4}\big(1-\omega_\pi(-\mathbbm1)\big).
\end{align*}
Furthermore, for symplectic $\pi$, it turns out to be
\begin{equation}\label{spi}
r_{S(\pi)}=\frac{\chi_\pi(\mathbbm1)}{2}=\frac{\deg(\pi)}{2}
\end{equation}
 due to Gow's formula.
\begin{c.intro}\label{c1}
  Let $G=\SL(2,q)$ with $q$ odd. Then the image of $w$ in \eqref{extend} is 
$$
  \{ (1+\mathfrak{e})^n \mid n \in \Z\}.$$

\end{c.intro}
\begin{proof}
For $q=3$, there exists a unique irreducible symplectic representation $\pi_0$ of $G$ with degree $2$. From Equation \eqref{spi}, we obtain $r_{S(\pi_0)}=1$, and so $w(S(\pi_0))=1+\fe.$
\vspace{0.5mm}

 For $q>3$, there exist irreducible symplectic representations $\pi_1$ and $\pi_2$ of $G$ of degrees $q+1$ and $q-1$ respectively. (See Remark \ref{psc} for details.)
From Equation \eqref{spi}, we have
$$r_{S(\pi_1)}=\frac{q+1}{2}\;, \; r_{S(\pi_2)}=\frac{q-1}{2} $$
which are co-prime. So by B\'{e}zout's Identity, there exist integers $a,b$ such that $a\big(\frac{q+1}{2}\big)+b\big(\frac{q-1}{2}\big)=1$. \\

\noindent Therefore, there is a virtual representation $\pi\in\RO(G)$ with $r_\pi=1$ such that
$$w(\pi)=1+\mathfrak{e}.$$
Hence, the result follows from the multiplicativity of SWCs.
\end{proof}
\begin{remark}\label{psc}
  For completeness, we give explicit descriptions of $\pi_1$ and $\pi_2$. This entails recalling the construction of the principal series and irreducible cuspidal representations of $G$. 
 The reader is referred to \cite{Fulton.Harris} and \cite{Bushnell.Henniart} for more details.  
\vspace{2mm}

 Let $\widetilde T$ be the diagonal subgroup, isomorphic to $\f_q^\times\times\f_q^\times$, contained in the standard Borel subgroup $\widetilde B$ of upper-triangular matrices of $\widetilde G=\GL(2,q)$. 
When $\alpha,\beta$ are linear characters of $\f_q^\times$, we will write $\alpha \boxtimes \beta$ for the corresponding linear character of $\widetilde T$.
 Choose $\as:\f_q^\times\to\cc^\times$ satisfying
\begin{align*}
\as(-1)&=-1\\
\as^2&\neq1.
 \end{align*}
One can inflate  $\as\boxtimes1$ from $\widetilde T$ to $\widetilde B$ and then consider the usual complex parabolic induction $\pi_\as:=\Ind_{\widetilde B}^{\widetilde G}(\as\boxtimes1)$. This is an irreducible principal series representation of $\widetilde G$ of degree $q + 1$.
\vspace{1mm}

 We take $\pi_1=\res^{\widetilde G}_G\pi_\as$. This restriction is self-dual and irreducible. By Gow's formula, $\pi_1$ is symplectic.

 Let $\widetilde T_e$ be an elliptic torus of $\widetilde G$, thus isomorphic to $\f_{q^2}^\times$. Let $\widetilde{Z}$ be the center and $N$ be the upper unitriangular subgroup of $\widetilde G$. Choose a linear character $\chi$ of $\widetilde T_e$ such that
\begin{align*}
\tht^q\neq \tht,&\;\; \tht^2\neq 1\\
\tht(-1)&=-1.
\end{align*} 
We fix a nontrivial character $\varphi$ of $N$, and define a linear character of $\widetilde{Z} N$ as $\chi_\varphi(zn)=\chi(z)\varphi(n)$. Set
\[\pi_\chi=\Ind_{\widetilde{Z}N}^{\widetilde G}\chi_\varphi-\Ind_{\widetilde T_e}^{\widetilde G}\chi.\]

\noindent This is an irreducible, cuspidal representation of $\widetilde G$ of dimension $q-1$. 

When restricted to $G$, $\pi_\chi$ remains irreducible and is called an irreducible cuspidal representation of $G$. (A representation of $G$ is called \emph{cuspidal}, provided each of its irreducible constituents is irreducible cuspidal.)
\vspace{1mm}

 Define $\pi_2=\res^{\widetilde G}_G\pi_\chi$. Again one sees that $\pi_2$ is symplectic by Gow's formula.
\end{remark}
It is already known from Theorem \ref{prop} that $H^*_{\SW}(G)\subseteq \z/2\z[\mathfrak{e}]$. Here we make a stronger statement:
\begin{c.intro}\label{eta}
Let $G=\SL(2,q)$ with $q$ odd. Then, $$H^*_{\SW}(G)=\z/2\z[\mathfrak{e}].$$
\end{c.intro}
\begin{proof}
For equality, we construct an orthogonal representation $\eta$ of $G$ such that $w_4(\eta)=\mathfrak{e}$ as follows. 

For $q>3$, consider the irreducible symplectic representations $\pi_1$, $\pi_2$ of $G$ from Corollary \ref{c1}. Let $\eta=S(\pi_1)\oplus S(\pi_2)$. It can easily seen that $r_{\pi\oplus\pi'}=r_\pi+r_\pi'$  for any orthogonal $\pi,\pi'$.  Therefore, we have
$$r_\eta=r_{S(\pi_1)\oplus S(\pi_2)}=r_{S(\pi_1)}+r_{S(\pi_2)}=\frac{q+1}2+\frac{q-1}2=q.$$
Since $q$ is odd, we have $$w(\eta)=(1+\mathfrak{e})^{q}=1+\mathfrak{e}+\cdots. $$
Therefore, $w_4(\eta)=\mathfrak{e}.$

Moreover, we already have $w_4(S(\pi_0))=\fe$ for $q=3$ which completes the proof.
\end{proof}

Next, we give a nonvanishing condition for the top SWC.

\begin{c.intro}\label{ecodd}
 Let $G=\SL(2,q)$ with $q$ odd. Let $\pi$ be an orthogonal representation of $G$. Then, the following are equivalent:
\begin{roster} 
\item $w_{\Top}(\pi)\neq 0$.
\item $\pi=S(\varphi),$ where every irreducible constituent $\varphi_i$ of $\varphi$ has central character satisfying $\omega_{\varphi_i}(-\mathbbm1)=-1$.
\item $\pi(-\mathbbm1)=-1$, meaning $\pi(-\mathbbm 1)$ acts by the scalar $-1$. 
\end{roster}
\end{c.intro}

\begin{proof} 
 We first prove (i) and (ii) are equivalent. Assume $\pi=S(\varphi)=\bigoplus\limits_{i=1}^r S(\varphi_i)$ such that $\omega_{\varphi_i}(-\mathbbm1)=-1$ for all $1\leq i\leq r$.  
\vspace{2mm}

\noindent Clearly $\chi_\pi(\mathbbm1)=2\chi_\varphi(\mathbbm1)$, and since $\chi_{\varphi_i}(-\mathbbm1)=\omega_{\varphi_i}(-\mathbbm1)\chi_{\varphi_i}(\mathbbm1)=-\chi_{\varphi_i}(\mathbbm1)$, we have  
\begin{align*}
\chi_\pi(-\mathbbm1)=2\sum_{i=1}^r\chi_{\varphi_i}(-\mathbbm1)=-2\chi_\varphi(\mathbbm1).
\end{align*}
 This gives
$$
r_\pi=\frac12\chi_\varphi(\mathbbm1)=\frac14\chi_\pi(\mathbbm1).
$$
Therefore, we have $w(\pi)=(1+\mathfrak{e})^{\frac14\deg\pi}$, implying 
$$w_{\Top}(\pi)=\mathfrak{e}^{\frac14\deg\pi}\neq 0.$$
For the converse, suppose $w_{\Top}(\pi)\neq 0$. Being orthogonal, we can write $\pi$ as
$$\pi=\bigoplus_{i}\rho_i\oplus\bigoplus_j S(\phi_j)\oplus \bigoplus_k S(\varphi_k)$$
such that each $\rho_i$ is irreducible orthogonal, whereas $\phi_j,\varphi_k$ are irreducible non-orthogonal with $\omega_{\phi_j}(-\mathbbm1)=1$ for each $j$, and $\omega_{\varphi_k}(-\mathbbm1)=-1$ for each  $k$.\\
From Theorem \ref{thm2.section} and Corollary \ref{thm1.section}, we obtain
\begin{align*}
w(\pi)&=\prod_k(1+\mathfrak{e})^{\frac{\deg\varphi_k}{2}}\\
&=(1+\mathfrak{e})^{\frac{\deg\varphi}{2}}, \text{ where } \varphi=\bigoplus_k\varphi_k.
\end{align*}
Now, the condition $w_{\Top}(\pi)\neq 0$ implies $\deg\pi=4\cdot\frac{\deg\varphi}{2}$. Therefore,
\begin{align*}
\sum\limits_{i}\deg\rho_i+2\sum\limits_j \deg\phi_j +2\sum\limits_k \deg\varphi_k&=2\deg\varphi\\
\sum\limits_{i}\deg\rho_i+2\sum\limits_j \deg\phi_j&=0,
\end{align*}
which means $\rho_i$ and $S(\phi_j)$ don't appear in $\pi$. Hence $\pi=\bigoplus\limits_i S(\varphi_i)$, with each $\varphi_i$ irreducible and $\omega_{\varphi_i}(-\mathbbm1)=-1$. 

\vspace{2mm}

Next, suppose that $\pi_1, \ldots, \pi_r$ are representations of $G$, and $\pi$ is their direct sum. Then $\pi$ satisfies condition (iii) iff each $\pi_i$  satisfies condition (iii). (Because a block matrix equals the negative of the identity matrix iff each block is the negative of the identity matrix.) In particular, this shows that (ii) implies (iii). By Gow's formula, no irreducible orthogonal representation satisfies condition (iii), and so by decomposing $\pi$ we see that (iii) implies (ii).

\end{proof}

For $\pi$ orthogonal, let $k_0$ be the least $k>0$ such that $w_k(\pi)\neq 0$. Then $w_{k_0}(\pi)$ is known as the \textit{obstruction class} of $\pi$, following \cite{GJgl2}. 
\begin{c.intro}\label{obs}
 Let $\pi$ be an orthogonal representation of $G$. Put $t=\ord_2(r_\pi)$. Then the obstruction class of $\pi$ is $w_{2^{t+2}}(\pi)=\mf e^{2^t}$.
\end{c.intro}
\begin{proof}
By Theorem \ref{thm2.section},
$$w_{k}(\pi)=\begin{cases}{r_\pi\choose i}\mathfrak{e}^i ,\;&\;0\leq k=4i\leq \deg \pi\\
0,\;&\text{ otherwise}.
\end{cases}$$
Therefore the problem amounts to finding the smallest positive $k=4i$, so that the binomial coefficient  ${r_\pi\choose i}$ is odd. This is an elementary application of Lucas's Theorem  \cite{fine1947binomial}; proceeding as in 
\cite[Proposition 3]{GJgl2}, we find that the smallest such $k$ is $k=2^{t+2}$, giving the corollary.

\end{proof}

\subsection{Relation with $\GL(2,q)$}\label{GJ.sec}

Ganguly and Joshi compute SWCs for orthogonal representations  $\pi$ of $\widetilde G=\GL(2,q)$ in \cite{GJgl2}, with $q$ odd.  Let us review their result here, and see that it is harmonious with ours. 

Their SWCs are identified with members of $H^*(\widetilde T)$ for the bicyclic group $\widetilde T$, isomorphic to $C_{q-1} \times C_{q-1}$. 
When $q\equiv 3\:(\Mod\: 4)$, the cohomology is generated by elements $v_1,v_2$ of degree $1$, by K\"{u}nneth. 
When $q\equiv 1\:(\Mod\: 4)$, it is generated by elements $s_1,s_2,t_1,t_2$, where $\deg s_i=1$  and $\deg t_i=2$. Put $\delta=0$ if $\det \pi=1$ and $\delta=1$ otherwise.

In these terms, the 
 total SWC of $\pi$ is
\begin{equation}
\label{GJ}
w(\pi)=\begin{cases}
(1+\delta(s_1+s_2))(1+t_1+t_2+t_1t_2)^{r_\pi}(1+t_1+t_2)^{s_\pi/2},& q\equiv 1\;(\Mod\: 4)\\[1mm]
(1+v_1^2+v_2^2+v_1^2v_2^2)^{r_\pi}(1+v_1+v_2)^{s_\pi},& q\equiv 3\;(\Mod\: 4),
\end{cases}
\end{equation}
for a certain integer $s_\pi$, and   $r_\pi =\dfrac{1}{8}(\chi_\pi(\mathbbm1)-\chi_\pi(-\mathbbm1))$ as above.

Our two papers are  compatible via the following diagram, formed from the restriction maps on cohomology:
\[
\begin{tikzcd}
H^*(\widetilde G)\arrow[r]\arrow[d]&H^*(G)\arrow[d]\\
 H^*(\widetilde T)\arrow[r]&H^*(Z)
\end{tikzcd}
\]
 
 The RHS of \eqref{GJ} lies in $H^*(\widetilde T)$. The left map is injective by Quillen's work, and the right map is injective on SWCs by Theorem \ref{prop}. The lower map $H^*(\widetilde T) \to H^*(Z)$ maps $v_i$ to $v$ for $q\equiv 3\:(\Mod\: 4)$, and when $q\equiv 1\:(\Mod\: 4)$, it maps $s_i$ to $0$ and $t_i$ to $v^2$.
 This gives:
 \beq
 w(\pi|_Z)= (1+v^4)^{r_\pi}
\eeq
in all cases. Thus, for orthogonal representations of $G$ which extend to orthogonal representations of $\widetilde G$, our Theorem \ref{thm2.section} agrees with  the main result of \cite{GJgl2}.

\begin{remark} 
Not every orthogonal representation of $G$ extends to $\widetilde G$.  For instance when $q \equiv 1 \:(\Mod\: 4)$, the representation $\pi=\Ind_{\widetilde B}^{\widetilde G} (\sgn \boxtimes 1)$ (with $\widetilde B$ the upper triangular subgroup of $\widetilde G$), when restricted to $G$ is the sum of two irreducible orthogonal representations, neither of which extend to $\widetilde G$.
\end{remark}

\section{$\SL(2,q)$ with $q$ even}\label{even}
Let $G=\SL(2,q)$ with $q=2^r$ throughout this section. Recall that we write $N$ for the subgroup of upper unitriangular matrices. It is well-known that this is a Sylow $2$-subgroup of $G$, and therefore $N$ detects the mod $2$ cohomology of $G$ by Lemma \ref{syl}. All representations of $G$ are orthogonal by the main result in \cite{vinroot}. 

\subsection{Formulas for SWCs}
 
  The subgroup $N$ is an elementary abelian $2$-group of rank $r$, so as in Section \ref{ea2}, the mod $2$ cohomology ring of $N\cong C_2^r$ may be written as
$$H^*(N)\cong H^*(C^r_2)\cong \z/2\z[v_1,v_2,\hdots,v_r].$$

Let $T$ be the subgroup of diagonal matrices in $G$. That is
$$T=\Bigg\{\begin{small}\begin{pmatrix}
a&&0\\
0&& a^{-1}
\end{pmatrix}\end{small}: a\in \f_q^\times\Bigg\}.$$

The conjugation action of $T$ on $N$ is equivalent to the action of $\f_q^\times$ on $\f_q$ through multiplication by squares. Since $\f_q^\times$ has odd order, this action is transitive on $\f_q -\{0\}$. Therefore:
\begin{lemma}
$T$ acts transitively on the non-trivial linear characters of $N$. 
\end{lemma}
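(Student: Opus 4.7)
The plan is to reduce the claim to the transitivity of the conjugation action of $T$ on $N$ itself, which was already established in the previous paragraph. The key observation is that $N$ is an elementary abelian $2$-group, and its nontrivial characters form a torsor under $\mathbb{F}_q^\times$ once we fix a nontrivial additive character.

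First I would identify $N$ with $(\mathbb{F}_q,+)$ via $x\mapsto \begin{pmatrix}1&x\\0&1\end{pmatrix}$, and compute conjugation by $t=\diag(a,a^{-1})\in T$ on the matrix $\begin{pmatrix}1&x\\0&1\end{pmatrix}$; the result is $\begin{pmatrix}1&a^2x\\0&1\end{pmatrix}$. Thus $T$ acts on $N\cong\mathbb{F}_q$ by multiplication by $a^2$. Since $q=2^r$, the Frobenius $a\mapsto a^2$ is a bijection of $\mathbb{F}_q^\times$, so as $a$ ranges over $\mathbb{F}_q^\times$, the scalar $a^2$ also ranges over all of $\mathbb{F}_q^\times$; hence $T$ acts transitively on $N\smallsetminus\{0\}$.

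Next I would fix a nontrivial additive character $\psi:\mathbb{F}_q\to\mathbb{C}^\times$ and parametrize the nontrivial characters of $N$ as $\psi_b(x)=\psi(bx)$ for $b\in\mathbb{F}_q^\times$. The induced action of $T$ on characters is $(t\cdot\chi)(n)=\chi(t^{-1}nt)$, so
\[
(t\cdot\psi_b)(x)=\psi_b(a^{-2}x)=\psi(a^{-2}bx)=\psi_{a^{-2}b}(x).
\]
Therefore $t=\diag(a,a^{-1})$ acts on the parameter $b$ by multiplication by $a^{-2}$. By the same Frobenius argument, $\{a^{-2}:a\in\mathbb{F}_q^\times\}=\mathbb{F}_q^\times$, so $T$ acts transitively on $\mathbb{F}_q^\times$ and hence on the nontrivial characters of $N$.

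There is essentially no obstacle here: the argument is a short unpacking of the definition together with the fact that squaring is a bijection in characteristic $2$. The only thing to be mindful of is to correctly translate the transitive action on $N$ into a transitive action on $\widehat{N}\smallsetminus\{1\}$ via the contragredient, which is automatic because $\mathbb{F}_q^\times$ is abelian and acts by scalar multiplication on both sides.
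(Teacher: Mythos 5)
Your proof is correct and follows essentially the same route as the paper: you compute that conjugation by $\diag(a,a^{-1})$ acts on $N\cong\mathbb{F}_q$ by multiplication by $a^2$, observe that squaring is bijective on $\mathbb{F}_q^\times$ (you cite the Frobenius in characteristic $2$; the paper equivalently cites the odd order of $\mathbb{F}_q^\times$), and conclude transitivity on nontrivial characters. The only difference is cosmetic: you make explicit, via the parametrization $\psi_b(x)=\psi(bx)$, the passage from transitivity on $N\smallsetminus\{0\}$ to transitivity on $\widehat{N}\smallsetminus\{1\}$, which the paper leaves implicit.
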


Let $\widehat N=\Hom(N,\cc^\times)$, the character group of $N$. From the above lemma, the $T$-orbits of $\widehat{N}$ are: $$\{1\}\;,\;\{\chi : \chi \neq 1\}.$$
Let us define
\beq
\reg(N)'=\reg(N) \ominus 1= \bigoplus_{\chi\neq 1}\chi.
\eeq

 Let $\pi$ be a  representation of $G$. 
Since $\res^G_N\pi$ is $T$-invariant, it is of the form
\begin{equation}\label{deg}
\begin{split}
\res^G_N\pi&\cong \ell_\pi1\oplus m_\pi \reg(N)',
\end{split}
\end{equation}
where $\ell_\pi,m_\pi$ are non-negative integers.
\begin{lemma}\label{mpi}
Let $\pi$ be a non-trivial irreducible representation of $G$. Then, $m_\pi=1$.
\end{lemma}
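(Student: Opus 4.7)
The plan is to use the classification of irreducible representations of $G=\SL(2,q)$ for $q=2^r$: apart from the trivial representation, the irreducibles are the Steinberg $\St$ (dimension $q$), the principal series $\pi_\alpha = \Ind_B^G \alpha$ for nontrivial characters $\alpha$ of $T$ inflated to the Borel $B = TN$ (dimension $q+1$), and the irreducible cuspidal representations (dimension $q-1$). I would verify $m_\pi = 1$ in each of these three cases.

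First I would treat the principal series and Steinberg together by applying Mackey's formula to $\res^G_N \Ind_B^G \alpha$, using the Bruhat decomposition $G = B \sqcup BwN$, where $w$ represents the nontrivial Weyl element. The two double cosets in $B \backslash G / N$ contribute respectively $\Ind_N^N(\alpha|_N) \cong \mathbbm{1}$ (since $\alpha$ is trivial on $N$) and $\Ind_{\{e\}}^N \mathbbm{1} \cong \reg(N)$ (since $N \cap w^{-1} B w = \{e\}$, i.e.\ $N$ intersects the opposite Borel trivially). Hence
\beq
\res^G_N \Ind_B^G \alpha \;\cong\; \mathbbm{1} \oplus \reg(N) \;\cong\; 2\,\mathbbm{1} \oplus \reg(N)'.
\eeq
For nontrivial $\alpha$ the induced representation is the irreducible $\pi_\alpha$, so $m_{\pi_\alpha} = 1$. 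For $\alpha = \mathbbm{1}$, using $\Ind_B^G \mathbbm{1} \cong \mathbbm{1} \oplus \St$ and subtracting the trivial constituent gives $\res^G_N \St \cong \mathbbm{1} \oplus \reg(N)'$, so $m_{\St} = 1$.

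For an irreducible cuspidal $\pi$, the defining property (as recorded in the paper for $q$ even) is that $\pi$ has no nonzero $N$-fixed vector, i.e.\ $\ell_\pi = 0$. Combined with the dimension identity $\dim \pi = \ell_\pi + m_\pi(q-1)$ implicit in \eqref{deg} and the fact that $\dim \pi = q-1$, this forces $m_\pi = 1$.

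No serious obstacle is anticipated; the Mackey/Bruhat calculation is entirely standard. As a small sanity check: for $q = 2$ the torus $T$ is trivial so there are no nontrivial principal series, but the Steinberg (dimension $2$) and the unique cuspidal (dimension $1$) are still handled by the arguments above.
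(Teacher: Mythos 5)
Your proof is correct, but it takes a genuinely different route from the paper's. You invoke the classification of the nontrivial irreducibles of $\SL(2,q)$ for $q$ even (Steinberg, principal series, cuspidal) and check $m_\pi=1$ case by case: your Mackey/Bruhat computation is right, since $B\backslash G/N$ has the two representatives $e$ and $w$, with $N\cap w^{-1}Bw=\{e\}$ giving $\res^G_N \Ind_B^G\alpha \cong 2\cdot\mathbbm{1}\oplus \reg(N)'$, and the cuspidal case follows from $\ell_\pi=0$ together with $\deg\pi=q-1$. The paper avoids the classification entirely: since $G$ is simple for $r\geq 2$ and $\pi$ is nontrivial irreducible, $N\not\leqslant\ker\pi$ forces $m_\pi\geq 1$; and if $m_\pi\geq 2$ then $\deg\pi=\ell_\pi+m_\pi(q-1)\geq 2(q-1)>q+1$, contradicting the coarse fact that irreducible degrees are at most $q+1$; the case $r=1$ is settled by hand in $\SL(2,2)\cong S_3$, exactly as in your sanity check (note that the inequality $2(q-1)>q+1$ also fails at $q=2$, which is why the paper isolates that case). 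The trade-off: the paper's argument is shorter and needs only two inputs (simplicity, maximal degree), whereas yours requires the full list of irreducibles for even $q$ (which should be cited, e.g.\ to the character table of $\SL(2,q)$) plus Mackey theory, but in return produces finer data --- the exact values $\ell_\pi=2,1,0$ for principal series, Steinberg, and cuspidal respectively --- which dovetails with Corollary \ref{eceven}, where $w_{\Top}(\pi)\neq 0$ iff $V^N=\{0\}$. One small point worth making explicit in your write-up: for $q$ even, $\mathbb F_q^\times$ has odd order, so every nontrivial $\alpha$ automatically satisfies $\alpha^2\neq 1$ and $\Ind_B^G\alpha$ is irreducible, which is why your principal-series case needs no further subdivision.
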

\begin{proof}
 For  $r\geq 2$, it is known that $G$ has no non-trivial normal subgroups. For non-trivial $\pi$, we must have $m_\pi\geq 1$ because ker$(\pi)=\mathbbm1$ and $N\not\leqslant\ker(\pi)$. Now from \eqref{deg}, we have
$$\deg\pi=\ell_\pi+m_\pi(q-1).$$
If $m_\pi>1$, then the sum $\ell_\pi+m_\pi(q-1)$ would be greater than the highest possible degree for an irreducible representation of $G$, which is $(q+1)$. So actually $m_\pi=1$.

For $r=1$, we have $\SL(2,2)\cong S_3$ with only two non-trivial irreducible representations. That is $\pi$ is either the `sgn' representation or the 2-dimensional standard representation of $S_3.$ Here we can see from direct calculations that $m_\pi=1$.
\vspace{2mm}

\end{proof}

It follows that $m_\pi=\dim_\C(V/V^G)$, where $V$ denotes the representation space of $\pi$. It is also the  number of non-trivial irreducible constituents of $\pi$. We can get a character formula for $m_\pi$ by taking character values at $n_0=\begin{pmatrix}
1&&1\\
0&&1
\end{pmatrix}\in N.$ In fact,
\begin{align*}
\chi_\pi(\mathbbm1)&=\ell_\pi+(q-1)m_\pi \text{ and }\\
\chi_\pi(n_0)&=\ell_\pi-m_\pi,
\end{align*}
so that $m_\pi=\frac1q(\chi_\pi(\mathbbm1)-\chi_\pi(n_0))$.

We are now ready to describe the SWCs of representations of $G$. Since $N$ is a detecting subgroup, we may and will identify $w(\pi)$ with its image in $H^*(N)$.

\begin{thm}\label{thm.even}
Let $q=2^r.$ Let $\pi$ be a representation of $\SL(2,q)$. Then, the total SWC of $\pi$ is
$$w(\pi)=\big(\prod_{v\in H^1(N)}(1+v)\big)^{m_\pi}.$$
 \end{thm}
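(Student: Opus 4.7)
The plan is to combine the detection statement for $N$ with the explicit decomposition of $\res^G_N \pi$ from \eqref{deg} and the multiplicativity of SWCs, reducing everything to a direct computation on the elementary abelian $2$-group $N$.

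First, since every representation of $G$ is orthogonal (by Vinroot's theorem, already invoked in the section), $w(\pi)$ is defined. Because $N$ is a Sylow $2$-subgroup of $G$, Lemma \ref{syl} tells us that the restriction map $H^*(G) \to H^*(N)$ is injective, so to prove the formula it suffices to verify it after restriction to $N$. By the functoriality property \eqref{funct.w}, this restriction equals $w(\res^G_N \pi)$.

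Next I would apply the decomposition from \eqref{deg}, namely $\res^G_N \pi \cong \ell_\pi \cdot 1 \oplus m_\pi \cdot \reg(N)'$, and use multiplicativity \eqref{p4}. Since $w(1)=1$, the trivial summands contribute nothing, so
\beq
w(\res^G_N \pi) = w(\reg(N)')^{m_\pi}.
\eeq
Now $\reg(N)' = \bigoplus_{\chi \neq 1} \chi$, and for each linear character $\chi$ of $N$, $w(\chi) = 1 + w_1(\chi)$. Under the isomorphism $w_1 : \widehat{N} \xrightarrow{\cong} H^1(N)$ from Section \ref{ea2}, the non-trivial characters of $N$ correspond bijectively to the non-zero elements of $H^1(N)$. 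Therefore
\beq
w(\reg(N)') = \prod_{\chi \neq 1}\bigl(1 + w_1(\chi)\bigr) = \prod_{\substack{v \in H^1(N)\\ v \neq 0}}(1+v) = \prod_{v \in H^1(N)}(1+v),
\eeq
where the last equality uses the fact that the $v=0$ factor equals $1$ and contributes trivially. Raising to the $m_\pi$-th power gives the claimed formula.

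There is no real obstacle here; the only subtlety is conceptual rather than computational, namely that one must freely pass between $w(\pi)$ as an element of $H^*(G)$ and its image in $H^*(N)$, which is justified precisely because $N$ detects the mod $2$ cohomology. One small thing worth double-checking at the end of the write-up is that $m_\pi$ as defined via \eqref{deg} agrees with $\dim_\C(V/V^G)$, as this is the form in which Theorem \ref{prop.two} is stated in the introduction; this identification is already noted in the paragraph following Lemma \ref{mpi}.
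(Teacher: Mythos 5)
Your proposal is correct and follows essentially the same route as the paper's own proof: restrict to the detecting Sylow $2$-subgroup $N$, apply the decomposition \eqref{deg} together with multiplicativity of SWCs, and compute the product of $1+v$ over the linear characters of $N$ via the isomorphism $w_1:\widehat{N}\xrightarrow{\cong}H^1(N)$. The only cosmetic difference is that the paper rewrites the restriction as $(\ell_\pi-m_\pi)1\oplus m_\pi\reg(N)$ and takes the product over all of $\widehat{N}$, whereas you work with $\reg(N)'$ and note that the $v=0$ factor is trivial --- the same computation, and your write-up is if anything slightly more explicit about the detection and functoriality steps.
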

\begin{proof}
The restriction $\res^G_N\pi$, from \eqref{deg}, is of the form
 $$\res^G_N\pi\cong (\ell_\pi-m_\pi)1\oplus m_\pi\reg(N).$$
Since $\reg(N)=\sum\limits_{\chi\in\widehat{N}}\chi$, we use the isomorphism between $\widehat{N}$ and $H^1(N)$ and the multiplicativity of SWCs to obtain 
\begin{align*}
w(\res^G_N\pi)&=w(\reg(N))^{m_\pi}\\
&=\big(\prod_{v\in H^1(N)}(1+v)\big)^{m_\pi}.
\end{align*}

\end{proof}
\begin{remark}
The formula in Theorem \ref{thm.even} above is also valid for $\widetilde G=\GL(2,q)$ with $q$ even, since $N$ is also a Sylow $2$-subgroup of $\widetilde G$. Thus this result in some sense completes \cite{GJgl2}.
\end{remark}

\subsection{Dickson Invariants}
The expansion of the product above is well-known. We have
\begin{equation} \label{Dickson}
\prod_{v\in H^1(N)}(1+v)=1+\sum_{i=1}^{r}d_{i}  \in H^*(N),\\
\end{equation}
where $d_{i}$ are \emph{Dickson invariants} in the algebra $H^*(N)$. 
 \\

We digress for a moment to recount the theory of Dickson invariants for mod $2$ spaces. Let $E$ be an $\f_2$-vector space. The ring of polynomials from $E$ to $\f_2$ can be identified with the symmetric algebra $S[E^\vee]$ on the dual space $E^\vee$. The linear group $\GL(E)$ acts on $S[E^\vee]$ via the contragradient map. It is natural to look for the invariants under this action.
\begin{thm}[\cite{wilkerson1983primer}]\label{wilk}
Suppose $\dim(E)=r$. Then the ring of invariants $S[E^\vee]^{\GL(E)}$ is a polynomial algebra  generated by elements $d_{i}$ called Dickson invariants, for $1\leq i\leq r$. The polynomial $d_{i}$ has degree $2^r-2^{r-i}$. Moreover,
$$\prod_{v\in E^\vee}(1+v)=1+\sum_{i=1}^{r}{d_{i}}\in S[E^\vee].$$
\end{thm}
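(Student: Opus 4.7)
The plan is to exhibit $d_1, \ldots, d_r$ as the coefficients of the \emph{additive polynomial}
$$f_r(X) = \prod_{v \in E^\vee}(X + v) \quad \in \quad S[E^\vee][X],$$
and then use a dimension and Hilbert series comparison to see that they generate the entire invariant ring. Since $\GL(E)$ permutes the $2^r$ elements of $E^\vee$, each coefficient of $f_r$ (in $X$) lies automatically in $S[E^\vee]^{\GL(E)}$.

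The first key step is that $f_r$ is an additive polynomial in $X$, i.e., only the monomials $X^{2^i}$ appear. For any $u, v \in E^\vee$, the sum $u + v$ also lies in $E^\vee$, so the product defining $f_r(u+v)$ contains the factor $(u+v) + (u+v) = 0$; in particular $f_r(v) = 0$ for $v \in E^\vee$, and hence
$$g(X,Y) := f_r(X+Y) - f_r(X) - f_r(Y)$$
vanishes on $E^\vee \times E^\vee$. But $g$ has degree $< 2^r$ in each variable (the top-degree terms cancel), while $|E^\vee| = 2^r$, so a standard two-variable degree argument forces $g \equiv 0$. An additive polynomial over $\f_2$ has only $X^{2^i}$-terms, giving
$$f_r(X) = X^{2^r} + d_1 X^{2^{r-1}} + d_2 X^{2^{r-2}} + \cdots + d_r X,$$
with $d_i \in S[E^\vee]^{\GL(E)}$ of degree $2^r - 2^{r-i}$. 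Setting $X = 1$ yields the identity $\prod_v(1+v) = 1 + d_1 + \cdots + d_r$.

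The remaining content is algebraic independence and generation. Independence follows from noticing that $d_1, \ldots, d_r$ form a homogeneous system of parameters: if $e \in E$ has $d_i(e) = 0$ for all $i$, then $f_r(\lambda)(e) = \lambda^{2^r}$ for every scalar $\lambda$, which on expanding $\prod_v(\lambda + v)(e)$ forces $v(e) = 0$ for every linear form $v$, so $e = 0$. Hence $S[E^\vee]$ is finite over $\f_2[d_1,\ldots,d_r]$ and the $d_i$ are algebraically independent by Krull dimension. For generation, I would invoke the Chevalley--Shephard--Todd theorem, noting that $\GL(E) \cong \GL(r, \f_2)$ is generated by transvections, which are pseudoreflections; this forces $S[E^\vee]^{\GL(E)}$ to be a polynomial algebra. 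A Hilbert series comparison then pins down the generators to be $d_1, \ldots, d_r$: both the invariant ring and $\f_2[d_1,\ldots,d_r]$ have Hilbert series $\prod_{i=1}^r (1 - t^{2^r - 2^{r-i}})^{-1}$, and the degree relation $|\GL(E)| = \prod_{i=0}^{r-1}(2^r - 2^i) = \prod_{i=1}^r (2^r - 2^{r-i})$ matches the rank of $S[E^\vee]$ as a free module over $\f_2[d_1,\ldots,d_r]$.

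The main obstacle is the generation step. The additive-polynomial trick delivers $d_1, \ldots, d_r$ along with the claimed product formula essentially for free; confirming that no further invariants are needed is where the real content of Wilkerson's theorem lies, and one really does need an appeal to either Chevalley--Shephard--Todd or an explicit (and somewhat elaborate) construction of a free module basis for $S[E^\vee]$ over $\f_2[d_1, \ldots, d_r]$.
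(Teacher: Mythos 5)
Your construction of the $d_i$ is correct and is in fact the same device Wilkerson uses (the paper itself gives no proof, citing his primer): $f_r(X)=\prod_{v\in E^\vee}(X+v)$ has $\GL(E)$-invariant coefficients, the interpolation argument over the fraction field of $S[E^\vee]$ does show $f_r$ is additive, hence of the form $X^{2^r}+d_1X^{2^{r-1}}+\cdots+d_rX$ with $\deg d_i=2^r-2^{r-i}$, and $X=1$ gives the product formula. Your homogeneous-system-of-parameters argument is also essentially fine, with the small caveat that it must be run at points of $E\otimes_{\f_2}\overline{\f}_2$ rather than of $E$ itself (over $\f_2$, vanishing on rational points proves nothing; your computation extends verbatim to the algebraic closure), yielding algebraic independence and the finiteness of $S[E^\vee]$ over $R:=\f_2[d_1,\dots,d_r]$.

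The genuine gap is the generation step. Chevalley--Shephard--Todd requires $|G|$ invertible in the ground field, and here the characteristic $2$ divides $|\GL(r,\f_2)|$ for all $r\geq 2$: this is squarely the modular case, where the implication ``generated by pseudoreflections $\Rightarrow$ polynomial invariant ring'' is \emph{false} in general (only Serre's converse direction survives; Kemper and Malle classified the modular reflection groups with polynomial invariants, and transvection groups do not automatically qualify). Your fallback, the Hilbert series comparison, is circular: Molien's formula is likewise unavailable in the modular case, so you have no independent computation of the Hilbert series of $S[E^\vee]^{\GL(E)}$ before knowing the generators. The standard repair --- and Wilkerson's actual argument --- is Galois-theoretic and needs no reflection-group input. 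Since every $v\in E^\vee$ is a root of the monic $f_r$, the ring $S[E^\vee]$ is integral over $R$, and its fraction field $L$ is a splitting field of the separable polynomial $f_r$ over $K=\operatorname{Frac}(R)$. Any $K$-automorphism of $L$ permutes the roots of $f_r$ and preserves addition, hence restricts to an $\f_2$-linear automorphism of $E^\vee$; as the roots generate $L$ over $K$, this gives $[L:K]=|\operatorname{Gal}(L/K)|\leq |\GL(E)|$. By Artin's theorem $[L:L^{\GL(E)}]=|\GL(E)|$, and $K\subseteq L^{\GL(E)}$, forcing $K=L^{\GL(E)}$. Consequently $S[E^\vee]^{\GL(E)}\subseteq S[E^\vee]\cap K=R$, the last equality because $S[E^\vee]$ is integral over $R$ and the polynomial ring $R$ is integrally closed; the reverse inclusion is clear, so $S[E^\vee]^{\GL(E)}=R$. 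With this substitution for your CST paragraph, the rest of your write-up stands.
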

These invariants are described explicitly in terms of certain determinants. (See \cite{wilkerson1983primer}, or \cite[Chapter III]{Milgram} for instance.)\\

Let us illustrate with some examples. If $E=\f_2$, then $S[E^\vee]$ is a polynomial algebra with only one generator $v$, that is $S[E^\vee]\cong \z/2\z[v]$. Here $d_{1}=v$ giving the ring of invariants $$S[E^\vee]^{\GL(V)}\cong\z/2\z[v]^{\GL(1,2)}=\z/2\z[v].$$
Also Theorem \ref{thm.even} for $G=\SL(2,2)$ and $\pi\neq 1$ irreducible says that 
$$w(\pi)=1+v=1+d_1.$$

Next, suppose $\dim E=2$. Then $S[E^\vee]$ is a polynomial algebra with two generators, say $v_1,v_2$ such that $S[E^\vee]\cong \z/2\z[v_1,v_2]$. Here the ring of invariants $S[E^\vee]^{\GL(E)}\cong\z/2\z[v_1,v_2]^{\GL(2,2)}$ is generated by
\begin{align*}
d_{1}&=v_1^2+v_2^2+v_1v_2,\\
d_{2}&=v_1v_2(v_1+v_2).
\end{align*}
Theorem \ref{thm.even} gives:
For $G=\SL(2,4)$, the total SWC of a non-trivial irreducible representation $\pi$ of $G$ is $$w(\pi)=1+d_{1}+d_{2}.$$

Now let $\mc D=\sum\limits_{i=1}^{r}d_{i}$ be the sum of the Dickson invariants, so that we can succinctly write
\beq
w(\pi)=(1+\mc D)^{m_\pi},
\eeq 
which is Theorem \ref{prop.two} from the Introduction.

\begin{ex} We have
\beq
w(\reg(G))=(1+ \mc D)^{q^2-1}.
\eeq
\end{ex}

\subsection{Corollaries}
Now we prove the corollaries mentioned in the Introduction.

\begin{c.intro}
  Let $G=\SL(2,q)$ with even $q$. Then the image of $w$ in \eqref{extend} is 
$$
  \{ (1+\mc D)^n \mid n \in \Z\}.$$
\end{c.intro}
\begin{proof}
Let $\pi$ be a non-trivial irreducible representation of $G$. Then, its total SWC is
$$w(\pi)=1+\mc D,$$
since $m_\pi=1$ by Lemma \ref{mpi}.
With the virtual representation $n\pi$, we obtain\\
 $(1+\mc D)^n$ in the image of $w$ for each $n\in\z$.
\end{proof}
The above proof also gives:
\begin{c.intro}
  Let $G=\SL(2,q)$ with $q=2^r$. Then, $$H^*_{\SW}(G)=\z/2\z[d_{1},\hdots,d_{r}].$$
\end{c.intro}

\begin{c.intro}\label{eceven}
Let $G=\SL(2,q)$ with $q$ even. Let $(\pi,V)$ be a representation of $G$. Then, $w_{\Top}(\pi)\neq 0$ if and only if $V^N=\{0\}$.
\end{c.intro}

\begin{proof}
From the preceding, we have
\beq
w_{\Top}(\reg(N)')=d_r,
\eeq
since this invariant has degree $q-1$.  The highest degree term of $w(\pi)$ is therefore
\beq
d_r^{m_\pi},
\eeq
which has degree $m_\pi (q-1)$; this is the degree of $\pi$ iff $\ell_\pi=0$ iff $V^N=\{0 \}$.
\end{proof}

\begin{remark} A representation $\pi$ of $\SL(2,q)$ is cuspidal iff $V^N=\{0 \}$. (See  \cite[page 410]{Bump}.)
 \end{remark}

\begin{c.intro}\label{obseven}
Let $G=\SL(2,q)$ with $q=2^r$. Let $\pi$ be a representation of $G$. Put $s=\ord_2(m_\pi)$. Then, the obstruction class of $\pi$ is equal to $w_{2^{r+s-1}}(\pi)=d_{1}^{2^s}.$
\end{c.intro}
\begin{proof}
From Theorem \ref{prop.two}, we have
$$w(\pi)=\sum_{i=0}^{m_\pi}{m_\pi\choose i}\mc D^i.$$
As in Corollary \ref{obs}, we can say that $m_\pi\choose2^s$ is the first odd binomial coefficient appearing in the above sum.  By expanding the term
$${m_\pi\choose 2^s}\mc D^{2^s}={m_\pi\choose 2^s}(d_{1}^{2^s}+\hdots+d_{r-1}^{2^s}+d_{r}^{2^s}),$$
we can deduce that ${m_\pi\choose 2^s}d_{1}^{2^s}$ has the least degree, which is $(2^{r-1}\cdot 2^s)$. Therefore, the least $k>0$ such that $w_k(\pi)\neq 0$ is $2^{r+s-1}$ as claimed. 
\end{proof}

 \end{document}